\documentclass[10pt]{amsart}
\usepackage{amssymb}

\newtheorem{thm}{Theorem}[section]
\newtheorem{lemma}[thm]{Lemma}
\newtheorem{proposition}[thm]{Proposition}

\newtheorem{definition}[thm]{Definition}
\newtheorem{corollary}[thm]{Corollary}

\newcommand{\A}{\mathbb{A}}

\newcommand{\p}{\mathbb{P}}
\newcommand{\q}{\mathbb{Q}}

\newcommand{\cf}{\mathrm{cf}}
\newcommand{\cof}{\mathrm{cof}}
\newcommand{\dom}{\mathrm{dom}}

\newcommand{\add}{\textrm{Add}}

\newcommand{\col}{\mathrm{Col}}
\newcommand{\restrict}{\upharpoonright}

\begin{document}

\title{A Note on the Eightfold Way}

\author{Thomas Gilton and John Krueger}

\address{Thomas Gilton \\ Department of Mathematics \\
	University of California, Los Angeles\\
	Box 951555\\
	Los Angeles, CA 90095-1555}
\email{tdgilton@math.ucla.edu}

\address{John Krueger \\ Department of Mathematics \\ 
	University of North Texas \\
	1155 Union Circle \#311430 \\
	Denton, TX 76203}
\email{jkrueger@unt.edu}

\date{December 2018; revised June 2019}

\thanks{2010 \emph{Mathematics Subject Classification:} 
	Primary 03E35; Secondary 03E05.}

\thanks{\emph{Key words and phrases.} Stationary reflection, Aronszajn tree, approachability property, 
	disjoint stationary sequence.}

\thanks{The second author was partially supported by 
	the National Science Foundation Grant
	No. DMS-1464859.}

\begin{abstract}
	Assuming the existence of a Mahlo cardinal, we construct a model in which there 
	exists an $\omega_2$-Aronszajn tree, the $\omega_1$-approachability property fails, 
	and every stationary subset of $\omega_2 \cap \cof(\omega)$ reflects. 
	This solves an open problem of \cite{eight}.
\end{abstract}

\maketitle

Cummings, Friedman, Magidor, Rinot, and Sinapova \cite{eight} proved the consistency 
of any logical Boolean combination of the statements which assert the $\omega_1$-approachability property, 
the tree property on $\omega_2$, and stationary reflection at $\omega_2$. 
For most of these combinations, they assumed the existence of a 
weakly compact cardinal in order to construct the desired model. 
This is a natural assumption to make, since the $\omega_2$-tree property implies that 
$\omega_2$ is weakly compact in $L$. 
On the other hand, Harrington and Shelah \cite{HS} proved that stationary reflection at 
$\omega_2$ is equiconsistent with the existence of a Mahlo cardinal. 
Cummings et al.\ \cite{eight} asked whether a Mahlo cardinal is sufficient to prove the 
consistency of the existence of an $\omega_2$-Aronszajn tree, the failure of 
the $\omega_1$-approachability property, and stationary reflection at $\omega_2$. 
In this article we answer this question in the affirmative.

We begin by reviewing the relevant definitions and facts. 
We refer the reader to \cite{eight} for a more detailed discussion of these ideas and their history. 
A stationary set $S \subseteq \omega_2 \cap \cof(\omega)$ is said to \emph{reflect} at an 
ordinal $\beta \in \omega_2 \cap \cof(\omega_1)$ if $S \cap \beta$ is a stationary subset of $\beta$. 
If $S$ does not reflect at any such ordinal, $S$ is \emph{non-reflecting}. 
We say that \emph{stationary reflection} holds at $\omega_2$ if every stationary subset of 
$\omega_2 \cap \cof(\omega)$ reflects to some ordinal in $\omega_2 \cap \cof(\omega_1)$. 

An \emph{$\omega_2$-Aronszajn tree} is a tree of height $\omega_2$, whose levels have size 
less than $\omega_2$, and which has no cofinal branches. 
The \emph{$\omega_2$-tree property} is the statement that there does not exist an $\omega_2$-Aronszajn tree. 
A well-known fact is that if the $\omega_2$-tree property holds, then $\omega_2$ is a weakly compact 
cardinal in $L$. 
Therefore, if one starts with a Mahlo cardinal $\kappa$ which is not weakly compact in $L$ 
(for example, if $\kappa$ is the least Mahlo cardinal in $L$), then in any subsequent forcing extension in 
which $\kappa$ equals $\omega_2$, there exists an $\omega_2$-Aronszajn tree.

The \emph{$\omega_1$-approachability property} is the statement that there exists a sequence 
$\vec a = \langle a_i : i < \omega_2 \rangle$ of countable subsets of $\omega_2$ and a club 
$C \subseteq \omega_2$ such that for all limit ordinals $\alpha \in C$, 
$\alpha$ is \emph{approachable} by $\vec a$ in the following sense: 
there exists a cofinal set $c \subseteq \alpha$ with order type equal to $\cf(\alpha)$ such that 
for all $\beta < \alpha$, $c \cap \beta$ is a member of $\{ a_i : i < \alpha \}$. 
Essentially, this property is a very weak form of the square principle $\Box_{\omega_1}$. 
The failure of the $\omega_1$-approachability property is known to hold in Mitchell's 
model \cite{mitchell} in which there does not exist a special $\omega_2$-Aronszajn tree, 
which he constructed using a Mahlo cardinal.

A solution to the problem of \cite{eight} addressed in this article 
was originally discovered by the first author, using a 
mixed support forcing iteration similar to the forcings appearing in \cite{eight} and \cite{jk32}. 
Later, the second author found a different proof using the idea of a disjoint stationary sequence. 
The latter proof is somewhat easier, since it avoids the technicalities of mixed support iterations, 
and also can be easily adapted to arbitrarily large continuum. 
In this article we present the second proof.

In Section 1, we discuss the idea of a disjoint stationary sequence, which was originally introduced by the second 
author in \cite{jk11}. 
In Section 2, we prove the main result of the paper. 
In Section 3, we adapt our model to arbitrarily large continuum using an argument of I.\ Neeman, 
which we include with his kind permission.

\section{Disjoint Stationary Sequences}

Recall that for an uncountable ordinal $\alpha \in \omega_2$, $P_{\omega_1}(\alpha)$ denotes the set of 
all countable subsets of $\alpha$. 
A set $c \subseteq P_{\omega_1}(\alpha)$ is \emph{club} if it is cofinal in $P_{\omega_1}(\alpha)$ 
and closed under unions of countable increasing sequences. 
A set $s \subseteq P_{\omega_1}(\alpha)$ is \emph{stationary} if it has non-empty intersection with 
every club in $P_{\omega_1}(\alpha)$. 
For an infinite cardinal $\kappa$, 
a forcing $\p$ is said to be \emph{$\kappa$-distributive} 
if it adds no new subsets of $V$ of size less than 
$\kappa$.

Let $\alpha$ be an uncountable ordinal in $\omega_2$. 
Fix an increasing and continuous sequence $\langle b_{i} : i < \omega_1 \rangle$ 
of countable sets with union equal to $\alpha$ (for example, fix a bijection $f : \omega_1 \to \alpha$ 
and let $b_{i} := f[i]$). 
Note that the set $\{ b_i : i < \omega_1 \}$ is club in $P_{\omega_1}(\alpha)$. 
A set $s \subseteq P_{\omega_1}(\alpha)$ 
is stationary in $P_{\omega_1}(\alpha)$ 
iff the set 
$x := \{ i < \omega_1 : b_{i} \in s \}$ 
is a stationary subset of $\omega_1$. 
Indeed, if $C \subseteq \omega_1$ is a club which is disjoint from $x$, then the set 
$\{ b_i : i \in C \}$ is a club subset of $P_{\omega_1}(\alpha)$ which is obviously 
disjoint from $s$. 
On the other hand, if $c \subseteq P_{\omega_1}(\alpha)$ is a club which is disjoint from $s$,
then the set 
$\{ i < \omega_1 : b_i \in c \}$ 
is a club in $\omega_1$, and this club is clearly disjoint from $x$.

\begin{definition}
A \emph{disjoint stationary sequence} on $\omega_2$ is a sequence 
$\langle s_\alpha : \alpha \in S \rangle$, where $S$ is a stationary 
subset of $\omega_2 \cap \cof(\omega_1)$, satisfying:
\begin{enumerate}
	\item for all $\alpha \in S$, $s_\alpha$ is a stationary subset of $P_{\omega_1}(\alpha)$;
	\item for all $\alpha < \beta$ in $S$, $s_\alpha \cap s_\beta = \emptyset$.
	\end{enumerate}
\end{definition}

As we will show below, the existence of a disjoint stationary sequence 
$\langle s_\alpha : \alpha \in S \rangle$ on $\omega_2$ implies the failure of the 
$\omega_1$-approachability property (more specifically, that the set $S$ is not in 
the approachability ideal $I[\omega_2]$). 
In our main result, the failure of the $\omega_1$-approachability property will follow from the 
existence of a disjoint stationary sequence.

One of the advantages of disjoint stationary sequences over other methods for obtaining the failure 
of approachability, such as using the $\omega_1$-approximation property, is their upward absoluteness.

\begin{lemma}
	Suppose that $\langle s_\alpha : \alpha \in S \rangle$ is a disjoint stationary sequence. 
	Let $\p$ be a forcing poset which preserves $\omega_1$ and $\omega_2$, preserves the stationarity of $S$, 
	and preserves stationary subsets of $\omega_1$. 
	Then $\p$ forces that $\langle s_\alpha : \alpha \in S \rangle$ is a disjoint stationary sequence.
	\end{lemma}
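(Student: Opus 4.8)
The plan is to show that each property in the definition of a disjoint stationary sequence is preserved by $\p$, using the hypotheses directly. Condition (2) is trivial: the sets $s_\alpha$ are sets of countable subsets of ordinals below $\omega_2$, hence are unchanged by forcing, so pairwise disjointness is absolute. Also, $S$ remains a subset of $\omega_2 \cap \cof(\omega_1)$ in the extension, since $\p$ preserves $\omega_1$ and $\omega_2$, and cofinalities of ordinals of cofinality $\omega_1$ cannot be changed downward to $\omega$ without collapsing $\omega_1$ (and an ordinal of cofinality $\omega_1$ in $V$ still has cofinality $\omega_1$ in the extension, as $\p$ preserves $\omega_1$ and adds no new cofinal $\omega$-sequence that would lower it — more simply, preservation of $\omega_1$ and $\omega_2$ means $\omega_2 \cap \cof(\omega_1)$ computed in $V$ is contained in its computation in $V[G]$). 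Combined with the assumption that $\p$ preserves the stationarity of $S$, condition (2) and the requirement on $S$ hold in $V[G]$.

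The main content is showing that for each $\alpha \in S$, the set $s_\alpha$ remains stationary in $P_{\omega_1}(\alpha)$ in $V[G]$. Here I would invoke the characterization established in the discussion preceding the Definition. Fix $\alpha \in S$; working in $V$, fix an increasing continuous sequence $\langle b_i : i < \omega_1 \rangle$ of countable sets with union $\alpha$, and let $x := \{ i < \omega_1 : b_i \in s_\alpha \}$, which is a stationary subset of $\omega_1$ because $s_\alpha$ is stationary in $P_{\omega_1}(\alpha)$. By hypothesis $\p$ preserves stationary subsets of $\omega_1$, so $x$ remains stationary in $V[G]$. Now I claim the equivalence between stationarity of $s_\alpha$ in $P_{\omega_1}(\alpha)$ and stationarity of $x$ in $\omega_1$ holds in $V[G]$ as well: the argument given in the excerpt only uses that $\langle b_i : i < \omega_1 \rangle$ is increasing continuous with union $\alpha$ and that $\{b_i : i < \omega_1\}$ is club in $P_{\omega_1}(\alpha)$, all of which are properties preserved into $V[G]$ since $\omega_1$ is preserved (so the sequence still has length $\omega_1$, is still continuous, and still has union $\alpha$; any countable set in $V[G]$ covering a given $b_i$ sits below some $b_j$ by a closure/cofinality argument internal to $V[G]$ — in fact the club-ness of $\{b_i : i < \omega_1\}$ in $P_{\omega_1}(\alpha)^{V[G]}$ follows because it is $\subseteq$-increasing, continuous, and cofinal). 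Hence in $V[G]$, since $x$ is stationary in $\omega_1$, $s_\alpha$ is stationary in $P_{\omega_1}(\alpha)$.

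The one point deserving care — and the place I expect to spend the most effort — is verifying that $\{ b_i : i < \omega_1 \}$ is still club in $P_{\omega_1}(\alpha)$ as computed in $V[G]$, i.e.\ that it remains cofinal. A countable subset $z$ of $\alpha$ lying in $V[G]$ but not in $V$ must still be covered by some $b_i$: since $\p$ preserves $\omega_1$, the sequence $\langle b_i : i < \omega_1 \rangle$ is still increasing, continuous, and has union $\alpha$ in $V[G]$, and for a countable $z \subseteq \alpha$ in $V[G]$, the set $\{ i < \omega_1 : z \cap b_i \text{ is unbounded in the } b_i\text{-listing} \}$ — more straightforwardly, for each $\xi \in z$ pick the least $i_\xi$ with $\xi \in b_{i_\xi}$, and then $i^* := \sup_{\xi \in z} i_\xi < \omega_1$ since $z$ is countable and $\omega_1$ is preserved, so $z \subseteq b_{i^*}$. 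This confirms cofinality, and continuity and increasingness are immediate, so the club-ness is genuinely preserved and the equivalence from the preamble applies verbatim in $V[G]$. Assembling the three points gives that $\langle s_\alpha : \alpha \in S \rangle$ is a disjoint stationary sequence in $V[G]$.
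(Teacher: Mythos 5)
Your proof is correct and is exactly the ``straightforward'' argument the paper omits: disjointness is absolute, $\cof(\omega_1)$ and the stationarity of $S$ are preserved by hypothesis, and the stationarity of each $s_\alpha$ is transferred to a stationary subset of $\omega_1$ via the club $\{b_i : i < \omega_1\}$ from the paper's preamble, whose club-ness in $P_{\omega_1}(\alpha)^{V[G]}$ you rightly verify. Nothing is missing.
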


The proof is straightforward.

\begin{corollary}
	Assume that $\langle s_\alpha : \alpha \in S \rangle$ is a disjoint stationary sequence. 
	Let $\p$ be a forcing poset which is either c.c.c., or $\omega_2$-distributive and preserves the 
	stationarity of $S$. 
	Then $\p$ forces that $\langle s_\alpha : \alpha \in S \rangle$ is a disjoint stationary sequence.
\end{corollary}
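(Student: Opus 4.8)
The plan is to deduce the corollary from the preceding Lemma: in each of the two cases it suffices to check that $\p$ preserves $\omega_1$ and $\omega_2$, preserves the stationarity of $S$, and preserves stationary subsets of $\omega_1$, and then quote the Lemma.

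First I would treat the case that $\p$ is c.c.c. Then $\p$ preserves all cardinals and cofinalities, so $\omega_1$ and $\omega_2$ are preserved. Here I would invoke the standard fact that a c.c.c.\ forcing preserves stationary subsets of every regular uncountable cardinal $\lambda$: given a $\p$-name $\dot C$ for a club in $\lambda$ and a condition $p$, for each $\alpha<\lambda$ pick (by density, since $p$ forces $\dot C$ to be unbounded) a maximal antichain $A_\alpha$ below $p$ together with ordinals $\beta_r\in(\alpha,\lambda)$ for $r\in A_\alpha$ with $r\Vdash\check\beta_r\in\dot C$; by the countable chain condition $A_\alpha$ is countable, so $g(\alpha):=\sup\{\beta_r:r\in A_\alpha\}<\lambda$ since $\lambda$ is regular and uncountable. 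If $T\subseteq\lambda$ is stationary in $V$ and $\alpha\in T$ is a closure point of $g$ (the closure points form a club of $\lambda$ lying in $V$), then $p\Vdash\dot C\cap(\gamma,\alpha)\neq\emptyset$ for every $\gamma<\alpha$, and since $\dot C$ is closed below $\lambda$ this forces $\alpha\in\dot C\cap\check T$. Applying this with $\lambda=\omega_1$ gives preservation of stationary subsets of $\omega_1$, and applying it with $\lambda=\omega_2$ and $T=S$ gives preservation of the stationarity of $S$. So the Lemma applies.

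Next I would treat the case that $\p$ is $\omega_2$-distributive and preserves the stationarity of $S$. By definition $\p$ adds no new subset of $V$ of size less than $\omega_2$, so in particular it adds no new subset of $\omega_1$ (and no new subset of $\omega$). The absence of new subsets of $\omega$ shows $\omega_1$ is preserved. If $\omega_2$ were collapsed or had its cofinality lowered there would be a cofinal map from some ordinal $\le\omega_1$ into $\omega_2$; such a map is a set of size $\le\omega_1<\omega_2$, so it would already belong to $V$, contradicting the regularity of $\omega_2$ in $V$. Hence $\omega_2$ is preserved. Preservation of the stationarity of $S$ holds by assumption. Finally, since $\p$ adds no new subset of $\omega_1$, each condition in a dense set forces a given name $\dot C$ for a club in $\omega_1$ to equal some fixed ground-model set, which is genuinely club in $V$ because closure is absolute and $\omega_1$ is preserved; such a set meets every ground-model stationary subset of $\omega_1$, so stationary subsets of $\omega_1$ are preserved. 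The Lemma again applies, completing the proof.

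I do not expect a genuine obstacle, as the argument is only a verification of the hypotheses of the Lemma; the two points that call for a little care are that in the c.c.c.\ case $S$ must be handled as a stationary subset of $\omega_2$ rather than of $\omega_1$, and that in the distributive case it is precisely $\omega_2$-distributivity, rather than mere $\omega_1$-distributivity, that rules out the collapse of $\omega_2$.
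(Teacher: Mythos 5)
Your proof is correct and follows the route the paper intends: the corollary is stated immediately after Lemma 1.2 with no separate proof, precisely because it amounts to verifying the Lemma's hypotheses in each of the two cases, which is exactly what you do (via the standard facts that c.c.c.\ forcing preserves cardinals and stationary subsets of regular uncountable cardinals, and that $\omega_2$-distributive forcing adds no new sets of size $\le\omega_1$). No gaps.
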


The next result describes a well-known consequence of approachability; we include a proof for completeness.

\begin{proposition}
	Assume that the $\omega_1$-approachability property holds. 
	Then for any stationary set $S \subseteq \omega_2 \cap \cof(\omega_1)$, there exists 
	an $\omega_2$-distributive forcing which adds a club subset of $S \cup (\omega_2 \cap \cof(\omega))$.
	\end{proposition}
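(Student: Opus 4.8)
The plan is to take for the forcing the natural poset $\q$ for shooting a club through $S \cup (\omega_2 \cap \cof(\omega))$: a condition is a closed bounded set $c \subseteq \omega_2$ with $c \subseteq S \cup (\omega_2 \cap \cof(\omega))$, ordered by $d \le c$ iff $d$ end-extends $c$. Two things about $\q$ are essentially free. First, $\q$ is $\omega_1$-closed: for a descending $\omega$-chain the union together with the supremum of the maxima is again a condition, since the new top point has cofinality $\omega$. Second, a density argument shows that the union of a generic filter is unbounded in $\omega_2$, and, because conditions are closed and the order is end-extension, it is also closed in $\omega_2$; as $\q$ will turn out to preserve $\omega_1$, $\omega_2$ and the cofinalities of ordinals below $\omega_2$, this union is a club subset of $S \cup (\omega_2 \cap \cof(\omega))$. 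So the whole content is that $\q$ is $\omega_2$-distributive, i.e.\ adds no new $\omega_1$-sequence of ordinals.

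To prove distributivity, fix $p \in \q$ and a sequence $\langle D_\xi : \xi < \omega_1 \rangle$ of dense open subsets of $\q$; I must produce $q \le p$ in $\bigcap_{\xi < \omega_1} D_\xi$. Let $\vec a = \langle a_j : j < \omega_2 \rangle$ and a club $C$ witness the $\omega_1$-approachability property, fix a large regular $\theta$, and, using that $S$ is stationary, choose $M \prec H(\theta)$ with $|M| = \omega_1$, $\omega_1 \subseteq M$, all of $\vec a, C, S, \q, p, \langle D_\xi : \xi < \omega_1 \rangle$ in $M$, and $\delta := M \cap \omega_2 \in S$; a standard closure argument shows the set of such traces $M \cap \omega_2$ contains a club in $\omega_2$, hence meets $S$. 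Since $C \in M$ is club and $\delta = \sup(M \cap \omega_2)$, we get $\delta \in C$, and $\cf(\delta) = \omega_1$ because $\omega_1 \subseteq M$ and $|M| = \omega_1$. Hence $\delta$ is approachable by $\vec a$: there is a cofinal $c \subseteq \delta$, of order type $\omega_1$, all of whose proper initial segments lie in $\{ a_j : j < \delta \}$; write $\langle c(\xi) : \xi < \omega_1 \rangle$ for its increasing enumeration. The crucial consequence is that, since $\vec a \in M$ and every ordinal below $\delta$ is in $M$, each $\{ c(\eta) : \eta < \xi \} = c \cap c(\xi)$ lies in $M$ --- even though $c$ itself is not in $M$.

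Next I would fix in $M$ a well-ordering $<^*$ of $H(\theta)$ and build, by recursion, a descending sequence $\langle q_\xi : \xi < \omega_1 \rangle$ of conditions, all in $M$: let $q_0 \le p$ be $<^*$-least; let $q_{\xi+1}$ be the $<^*$-least condition in $D_\xi$ below $q_\xi$ with $\max(q_{\xi+1}) > c(\xi)$ (such a condition exists, since any condition can be end-extended by a point of $\omega_2 \cap \cof(\omega)$ lying below $\delta$ and above $c(\xi)$, and then further into $D_\xi$); and at a limit $\lambda$ set $q_\lambda = \bigl( \bigcup_{\xi < \lambda} q_\xi \bigr) \cup \{ \sup_{\xi < \lambda} \max(q_\xi) \}$, which is a condition because the new top point has cofinality $\omega$. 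An induction on $\xi$ then shows $\langle q_\eta : \eta < \xi \rangle \in M$: the successor step is clear, and at a limit $\xi$ the sequence $\langle q_\eta : \eta < \xi \rangle$ is definable in $M$ from the parameters $p$, $<^*$, $\langle D_\eta : \eta < \omega_1 \rangle$ together with the set $\{ c(\eta) : \eta < \xi \} = c \cap c(\xi)$, which lies in $M$. Since each $q_\xi \in M$ we have $\max(q_\xi) < \delta$, while $\max(q_\xi) > c(\xi)$, and $c$ is cofinal in $\delta$, so $\sup_{\xi < \omega_1} \max(q_\xi) = \delta$. Therefore $q := \bigl( \bigcup_{\xi < \omega_1} q_\xi \bigr) \cup \{ \delta \}$ is a closed bounded subset of $\omega_2$ contained in $S \cup (\omega_2 \cap \cof(\omega))$ --- this is where $\delta \in S$ is used --- so $q \in \q$; it end-extends each $q_\xi$, so $q \le p$, and since each $D_\xi$ is open and $q \le q_{\xi+1} \in D_\xi$ we conclude $q \in \bigcap_{\xi < \omega_1} D_\xi$, as required.

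The main obstacle is exactly the limit step of this last recursion: keeping the partial sequences $\langle q_\eta : \eta < \xi \rangle$ inside $M$ so that the construction can be continued. This is the only place the approachability property enters, and it enters in the form that the proper initial segments of the approachability witness $c$ lie in $M$, which is what lets $M$ recover the canonically defined initial segments of $\langle q_\xi \rangle$. The remaining points --- the existence of a suitable $M$ with $M \cap \omega_2 \in S$, the verification that the running suprema have cofinality $\omega$, the preservation of cardinals and cofinalities below $\omega_2$, and the genericity argument for the club --- are routine and I would treat them briefly.
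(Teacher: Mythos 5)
Your proposal is correct and follows essentially the same route as the paper: the same club-shooting poset ordered by end-extension, an elementary substructure $M$ with $M \cap \omega_2 \in S$, the approachability witness whose proper initial segments lie in $M$, and a canonically defined descending $\omega_1$-sequence of conditions kept inside $M$ via a well-ordering, capped off by $\delta \in S$. The only differences are cosmetic (you merge the paper's two-step successor extension into one step).
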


\begin{proof}
	Fix a sequence $\vec a = \langle a_i : i < \omega_2 \rangle$ of countable subsets of $\omega_2$ 
	and a club $C \subseteq \omega_2$ such that for all limit ordinals $\alpha \in C$, 
	there exists a set $e \subseteq \alpha$ which is cofinal in $\alpha$, has order type $\cf(\alpha)$, 
	and for all $\beta < \alpha$, $e \cap \beta \in \{ a_i : i < \alpha \}$.
	
	Define $\p$ as the forcing poset consisting of all closed and bounded subsets of 
	$S \cup (\omega_2 \cap \cof(\omega))$, ordered by end-extension. 
	We will show that $\p$ is $\omega_2$-distributive. 
	Observe that if $c \in \p$ and $\gamma < \omega_2$, then there is $d \le c$ with 
	$\sup(d) \ge \gamma$ (for example, $d := c \cup \min(S \setminus \max \{ \sup(c), \gamma \})$). 
	Using this, a straightforward argument shows that, if $\p$ is $\omega_2$-distributive, then 
	$\p$ adds a club subset of $S \cup (\omega_2 \cap \cof(\omega))$.
	
	To show that $\p$ is $\omega_2$-distributive, fix $c \in \p$ and a family 
	$\{ D_i : i < \omega_1 \}$ of dense open subsets of $\p$. 
	We will find $d \le c$ in $\bigcap \{ D_i : i < \omega_1 \}$.
	
	Fix a regular cardinal $\theta$ large enough so that all of the objects mentioned so 
	far are members of $H(\theta)$. 
	Fix a well-ordering $\unlhd$ of $H(\theta)$. 
	Since $S$ is stationary, we can find an elementary substructure $N$ of 
	$(H(\theta),\in,\unlhd)$ such that $\vec a$, $C$, $S$, $\p$, $c$, and $\langle D_i : i < \omega_1 \rangle$ 
	are members of $N$ and $\alpha := N \cap \omega_2 \in S$. 
	In particular, $\alpha \in C \cap \cof(\omega_1)$. 
	Fix a cofinal set $e \subseteq \alpha$ with order type $\omega_1$ 
	such that for all $\beta < \alpha$, $e \cap \beta \in \{ a_i : i < \alpha \}$. 
	Enumerate $e$ in increasing order as $\langle \gamma_i : i < \omega_1 \rangle$. 
	Note that since $\{ a_i : i < \alpha \}$ is a subset of $N$ by elementarity, 
	for all $\beta < \alpha$, $e \cap \beta \in N$. 
	Consequently, for each $\delta < \omega_1$, 
	the sequence $\langle \gamma_i : i < \delta \rangle$ is a member of $N$.
	
	We define by induction a strictly descending 
	sequence of conditions $\langle c_i : i < \omega_1 \rangle$, starting 
	with $c_0 := c$, together with some auxiliary objects. 
	We will maintain that for each $\delta < \omega_1$, the sequence $\langle c_i : i < \delta \rangle$ 
	is definable in $H(\theta)$ from parameters in $N$, and hence is a member of $N$.

	Given a limit ordinal $\delta < \omega_1$, assuming that $c_i$ is defined for all $i < \delta$, 
	we define $c_{\delta,0}$ to be equal to $\bigcup \{ c_i : i < \delta \}$. 
	Then clearly $\sup(c_{\delta,0})$ is an ordinal of cofinality $\omega$. 
	Hence, $c_{\delta} := c_{\delta,0} \cup \{ \sup(c_{\delta,0}) \}$ is a condition and is a strict 
	end-extension of $c_i$ for all $i < \delta$.
	Now assume that $\xi < \omega_1$ and $c_i$ is defined for all $i \le \xi$. 
	Let $c_{\xi,0}$ be the $\unlhd$-least strict end-extension of $c_{\xi}$ such that 
	$\max(c_{\xi,0}) \ge \gamma_\xi$. 
	Now let $c_{\xi+1}$ be the $\unlhd$-least condition in $D_\xi$ which is below $c_{\xi,0}$. 
	This completes the construction.
	Define $d_0 := \bigcup \{ c_i : i < \omega_1 \}$. 
	
	Reviewing the inductive definition of the sequence $\langle c_i : i < \omega_1 \rangle$, 
	we see that for all $\delta < \omega_1$, $\langle c_i : i < \delta \rangle$ 
	is definable in $H(\theta)$ from parameters in $N$, including specifically the sequence 
	$\langle \gamma_i : i < \delta \rangle$. 
	Therefore, each $c_i$ is in $N$. 
	In addition, for each $i < \omega_1$, $\max(c_{i+1}) \ge \gamma_i$. 
	Since $\{ \gamma_i : i < \omega_1 \} = e$ is cofinal in $\alpha$, 
	$\sup(d_0) = \alpha$. 
	Let $d := d_0 \cup \{ \alpha \}$. 
	Then $d$ is a condition since $\alpha \in S$, 
	and $d \le c_i$ for all $i < \omega_1$, and in particular, $d \le c$.  
	For each $i < \omega_1$, $c_{i+1} \in D_i$, so $d \in D_i$.
	\end{proof}

\begin{proposition}
	Suppose that $\langle s_\alpha : \alpha \in S \rangle$ is a disjoint stationary sequence. 
	Then $(\omega_2 \cap \cof(\omega_1)) \setminus S$ is stationary.
	\end{proposition}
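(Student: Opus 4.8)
The plan is to prove that $(\omega_2 \cap \cof(\omega_1)) \setminus S$ intersects every club in $\omega_2$, which is exactly what it means for this set to be stationary. So fix an arbitrary club $C \subseteq \omega_2$; I will produce an ordinal $\delta \in C \cap \cof(\omega_1)$ with $\delta \notin S$.

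First I would fix a sufficiently large regular cardinal $\theta$ and build an internally approachable continuous increasing chain $\langle N_i : i < \omega_1 \rangle$ of countable elementary substructures of $H(\theta)$, each having $\langle s_\alpha : \alpha \in S \rangle$, $S$, and $C$ as members and satisfying $\langle N_j : j \le i \rangle \in N_{i+1}$. Put $N := \bigcup_{i < \omega_1} N_i$ and $\delta := \sup(N \cap \omega_2)$, and set $a_i := N_i \cap \omega_2$ for $i < \omega_1$. The standard facts about such chains give that $\omega_1 \subseteq N$, that hence $N \cap \omega_2$ is the ordinal $\delta$ and $\cf(\delta) = \omega_1$, and that $\delta \in C$. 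Moreover $\langle a_i : i < \omega_1 \rangle$ is increasing and continuous with union $\delta$, so, as recalled above, $\{ a_i : i < \omega_1 \}$ is a club subset of $P_{\omega_1}(\delta)$.

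The key step is to check that $\delta \notin S$. Suppose toward a contradiction that $\delta \in S$. Then $s_\delta$ is a stationary subset of $P_{\omega_1}(\delta)$, so it meets the club $\{ a_i : i < \omega_1 \}$; fix $i < \omega_1$ with $a_i \in s_\delta$. Since $a_i = N_i \cap \omega_2$ is definable from the parameters $N_i$ and $\omega_2$, and $N_i \in N_{i+1}$, we have $a_i \in N_{i+1}$; and $\langle s_\alpha : \alpha \in S \rangle$ and $S$ also belong to $N_{i+1}$. Now the statement ``there exists $\alpha \in S$ with $a_i \in s_\alpha$'' is true in $H(\theta)$, witnessed by $\alpha = \delta$, so by elementarity it is true in $N_{i+1}$; hence there is $\alpha \in N_{i+1}$ with $\alpha \in S$ and $a_i \in s_\alpha$. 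But then $\alpha \in N_{i+1} \cap \omega_2 \subseteq N \cap \omega_2 = \delta$, so $\alpha < \delta$ and in particular $\alpha \neq \delta$, while $a_i \in s_\alpha \cap s_\delta$ with $\alpha, \delta \in S$ --- contradicting clause (2) of the definition of a disjoint stationary sequence. Therefore $\delta \notin S$, and since $\delta \in C \cap \cof(\omega_1)$, this completes the argument.

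I do not expect a real obstacle. The one point that needs care is ensuring the witness $\alpha$ delivered by elementarity lies strictly below $\delta$; this is precisely why the chain is taken internally approachable, so that $N_{i+1}$ contains the bounded countable set $a_i$ together with the entire sequence $\langle s_\alpha : \alpha \in S \rangle$ yet does not contain $\delta$. The ancillary facts invoked --- that the union of such a chain meets $\omega_2$ in an ordinal of cofinality $\omega_1$ which can be placed inside any prescribed club, and that the sets $N_i \cap \omega_2$ form a club in $P_{\omega_1}(\delta)$ --- are routine.
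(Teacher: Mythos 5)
Your proof is correct and follows essentially the same argument as the paper: build an internally approachable $\omega_1$-chain of countable elementary submodels, note that $\langle N_i \cap \omega_2 : i < \omega_1\rangle$ is club in $P_{\omega_1}(\delta)$, and use elementarity plus disjointness to rule out $\delta \in S$. The only cosmetic difference is that you apply elementarity inside $N_{i+1}$ rather than inside the union $N$, which changes nothing.
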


\begin{proof}
	Let $C$ be club in $\omega_2$. 
	By induction, it is easy to define an increasing and continuous 
	sequence $\langle N_i : i < \omega_1 \rangle$ satisfying:
	\begin{enumerate}
		\item each $N_i$ is a countable elementary substructure of $H(\omega_3)$ containing 
		the objects $\langle s_\alpha : \alpha \in S \rangle$ and $C$;
		\item for each $i < \omega_1$, $N_i \in N_{i+1}$.
		\end{enumerate}
	Let $N := \bigcup \{ N_i : i < \omega_1 \}$. 
	Then by elementarity, $\omega_1 \subseteq N$ and $\beta := N \cap \omega_2$ has cofinality 
	$\omega_1$ and is in $C$.
		
	We claim that $\beta \notin S$, which completes the proof. 
	Suppose for a contradiction that $\beta \in S$. 
	Then $s_\beta$ is defined and is a stationary subset of $P_{\omega_1}(\beta)$. 
	On the other hand, $\langle N_i \cap \omega_2 : i < \omega_1 \rangle$ is a club 
	subset of $P_{\omega_1}(\beta)$. 
	So we can fix $i < \omega_1$ such that $N_i \cap \omega_2 \in s_\beta$.

	Now the sequence $\langle s_\alpha : \alpha \in S \rangle$ is a member of $N$, and also 
	$N_i \cap \omega_2 \in N \cap s_\beta$. 
	So by elementarity, there exists $\alpha \in N \cap S$ such that $N_i \cap \omega_2 \in s_\alpha$. 
	Then $\alpha \in N \cap \omega_2 = \beta$, so $\alpha < \beta$. 
	Thus, we have that $N_i \cap \omega_2$ is a member of both $s_\alpha$ and $s_\beta$, 
	which contradicts that $s_\alpha \cap s_\beta = \emptyset$.
	\end{proof}

	\begin{corollary}
		Assume that there exists a disjoint stationary sequence on $\omega_2$. 
		Then the $\omega_1$-approachability property fails.
		\end{corollary}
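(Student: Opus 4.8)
The plan is to argue by contradiction, setting the two preceding Propositions against each other after passing to a forcing extension. Suppose that the $\omega_1$-approachability property holds and that $\langle s_\alpha : \alpha \in S \rangle$ is a disjoint stationary sequence, so that $S$ is a stationary subset of $\omega_2 \cap \cof(\omega_1)$. By the Proposition on approachability, there is an $\omega_2$-distributive forcing $\p$ which adds a club $D \subseteq S \cup (\omega_2 \cap \cof(\omega))$. Let $G$ be $\p$-generic over $V$.

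First I would record the relevant absoluteness. Since $\p$ is $\omega_2$-distributive, it adds no new subsets of any ordinal below $\omega_2$; hence it preserves $\omega_1$ and $\omega_2$, and the cofinality of each ordinal below $\omega_2$ is the same in $V$ and in $V[G]$. In particular $\omega_2 \cap \cof(\omega_1)$ is the same set in both models, and, since $S \subseteq \cof(\omega_1)$, in $V[G]$ we have $D \cap \cof(\omega_1) \subseteq S$. Next I would check that $\p$ preserves the stationarity of $S$: if $E \subseteq \omega_2$ is a club in $V[G]$, then $D \cap E$ is a club, so it contains an ordinal $\gamma$ of cofinality $\omega_1$ — for instance the supremum of its first $\omega_1$ elements — and then $\gamma \in D \cap \cof(\omega_1) \subseteq S$, whence $S \cap E \neq \emptyset$.

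The contradiction is now immediate. Since $\p$ is $\omega_2$-distributive and preserves the stationarity of $S$, the preservation result stated above (the Corollary following the distributivity lemma) shows that $\langle s_\alpha : \alpha \in S \rangle$ remains a disjoint stationary sequence in $V[G]$. Applying, inside $V[G]$, the Proposition that a disjoint stationary sequence on $\omega_2$ makes the complement of its index set in $\omega_2 \cap \cof(\omega_1)$ stationary, we conclude that $(\omega_2 \cap \cof(\omega_1)) \setminus S$ is stationary in $V[G]$. On the other hand, $D \cap \cof(\omega_1) \subseteq S$ says exactly that the club $D$ is disjoint from $(\omega_2 \cap \cof(\omega_1)) \setminus S$, so this set is nonstationary in $V[G]$ — a contradiction. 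The only step that takes any real thought is the preservation of the stationarity of $S$, which in turn rests on the fact that $\omega_2$-distributive forcing cannot change the cofinality of ordinals below $\omega_2$; the remainder is just bookkeeping with the quoted results.
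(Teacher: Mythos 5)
Your proof is correct and follows essentially the same route as the paper: obtain the $\omega_2$-distributive club-adding forcing from the approachability proposition, note that it preserves the stationarity of $S$ (hence, by the preservation corollary, the disjoint stationary sequence survives), and then contradict the proposition that the complement of $S$ in $\omega_2 \cap \cof(\omega_1)$ must be stationary. The extra details you supply on cofinality preservation and on why the added club witnesses the stationarity of $S$ are exactly the routine verifications the paper leaves implicit.
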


	\begin{proof}
		Suppose for a contradiction that $\langle s_\alpha : \alpha \in S \rangle$ 
		is a disjoint stationary sequence and the $\omega_1$-approachability property holds. 
		By Proposition 1.4, fix an $\omega_2$-distributive forcing $\p$ which adds a club subset of 
		$S \cup (\omega_2 \cap \cof(\omega))$. 
		In particular, $\p$ forces that $(\omega_2 \cap \cof(\omega_1)) \setminus S$ is 
		non-stationary in $\omega_2$.
		By Proposition 1.5, the sequence $\langle s_\alpha : \alpha \in S \rangle$ is not a disjoint 
		stationary sequence in $V^\p$.
		
		Now $\p$ is $\omega_2$-distributive, and it preserves the stationarity of $S$ because 
		it adds a club subset of $S \cup (\omega_2 \cap \cof(\omega))$. 
		By Corollary 1.3, $\langle s_\alpha : \alpha \in S \rangle$ is a disjoint 
		stationary sequence in $V^\p$, which is a contradiction.
		\end{proof}
	
\section{The main result}

Assume for the rest of the section that $\kappa$ is a Mahlo cardinal. 
Without loss of generality, we may also assume that $2^\kappa = \kappa^+$, since this can 
be forced while preserving Mahloness. 
Define $S$ as the set of inaccessible cardinals below $\kappa$.

We will define a two-step forcing iteration $\p * \dot \A$ with the following properties. 
The forcing $\p$ collapses $\kappa$ to become $\omega_2$ and adds a disjoint stationary sequence on $S$. 
In $V^\p$, $\A$ is an iteration for destroying the stationarity of non-reflecting 
subsets of $\kappa \cap \cof(\omega)$. 
The forcing $\A$ will be $\kappa$-distributive and preserve the stationarity of $S$, which implies 
by Corollary 1.3 that 
there exists a disjoint stationary sequence in $V^{\p * \dot \A}$. 
Thus, in $V^{\p * \dot \A}$ we have that stationary reflection holds at $\omega_2$ and the 
$\omega_1$-approachability property fails. 
If, in addition, we assume that the Mahlo cardinal $\kappa$ is not weakly compact in $L$, then 
there exists an $\omega_2$-Aronszajn tree in $V^{\p * \dot \A}$ as discussed above.

The remainder of this section is divided into two parts. 
In the first part we will develop the forcing $\p$, and in the second we will 
handle the forcing $\A$ in $V^\p$. 
We will use the following theorem of Gitik \cite{gitik}. 
Suppose that $V \subseteq W$ are transitive 
models of \textsf{ZFC} with the same ordinals and the same 
$\omega_1$ and $\omega_2$. 
If $(P(\omega) \cap W) \setminus V$ is non-empty, then in $W$ the set 
$P_{\omega_1}(\omega_2) \setminus V$ is stationary in $P_{\omega_1}(\omega_2)$. 
For a regular cardinal $\kappa$, 
we let $\add(\kappa)$ denote the 
usual Cohen forcing consisting of 
all functions from some $\gamma < \kappa$ 
into $2$, ordered by reverse inclusion. 

We define by induction a forcing iteration 
$$
\langle \p_\alpha, \dot \q_\beta : \alpha \le \kappa, \beta < \kappa \rangle.
$$
This iteration will be a countable support forcing iteration of proper forcings. 
We will then let $\p := \p_\kappa$.

Fix $\alpha < \kappa$ and assume that $\p_\alpha$ has been defined. 
We split the definition of $\dot \q_\alpha$ into three cases. 
If $\alpha$ is an inaccessible cardinal, then let $\dot \q_\alpha$ be a $\p_\alpha$-name 
for the forcing $\add(\alpha)$. 
If $\alpha = \beta + 1$ where $\beta$ is inaccessible, then let $\dot \q_\alpha$ be a 
$\p_\alpha$-name for $\add(\omega)$. 
For all other cases, let $\dot \q_\alpha$ be a $\p_\alpha$-name for $\col(\omega_1,\omega_2)$.
Note that in any case, $\dot \q_\alpha$ is forced to be proper. 
Now let $\p_{\alpha+1}$ be $\p_\alpha * \dot \q_\alpha$.
At limit stages $\delta \le \kappa$, assuming that $\p_\alpha$ is defined for all $\alpha < \delta$, 
we let $\p_\delta$ denote the countable support limit of these forcings.

This completes the construction. 
For each $\alpha \le \kappa$, $\p_\alpha$ is a countable support iteration of proper forcings, 
and hence is proper. 
Also, by standard facts, if $\beta < \alpha$, then $\p_\beta$ is a regular suborder of $\p_\alpha$, 
and in $V^{\p_\beta}$, the quotient forcing $\p_\alpha / \dot G_{\p_\beta}$ is forcing equivalent to a 
countable support iteration of proper forcings, and hence is itself proper. 
We let $\dot{\p}_{\beta,\alpha}$ be a $\p_\beta$-name for this proper forcing iteration which is equivalent 
to $\p_\alpha / \dot G_{\p_\beta}$ in $V^{\p_\beta}$. 

One can show by well-known arguments 
that for all inaccessible cardinals $\alpha \le \kappa$, $\p_\alpha$ has size $\alpha$, 
is $\alpha$-c.c., and forces that $\alpha = \omega_2$. 
Namely, since $\alpha$ is inaccessible, for all $\beta < \alpha$, $|\p_\beta|< \alpha$. 
Hence $\p_\alpha$ has size $\alpha$ by definition. 
A standard $\Delta$-system argument shows that $\p_\alpha$ is $\alpha$-c.c., and since 
collapses are used at cofinally many stages below $\alpha$, $\p_\alpha$ turns $\alpha$ into $\omega_2$.

Let $\p := \p_{\kappa}$. 
In $V^{\p}$, let us define a disjoint stationary sequence. 
Recall that $S$ is the set of inaccessible cardinals in $\kappa$ in the ground model $V$. 
Since $\kappa$ is Mahlo, $S$ is a stationary subset of $\kappa$ in $V$.
As $\p$ is $\kappa$-c.c., $S$ remains stationary in $V^{\p}$. 
And since $\p$ is proper and forces that $\kappa = \omega_2$, 
each member of $S$ has cofinality $\omega_1$ in $V^{\p}$.

The set $S$ will be the domain of the disjoint stationary sequence in $V^{\p}$. 
Consider $\alpha \in S$. 
Then $\p_\alpha$ forces that $\alpha = \omega_2$. 
We have that $\p_{\alpha+1}$ is forcing equivalent to $\p_\alpha * \add(\alpha)$ and 
$\p_{\alpha+2}$ is forcing equivalent to 
$$
\p_\alpha * \add(\alpha) * \add(\omega).
$$
Clearly, $\alpha$ is still equal to $\omega_2$ after forcing with $\p_{\alpha+1}$ or 
$\p_{\alpha+2}$.

Since there exists a subset of $\omega$ in 
$V^{\p_{\alpha+2}} \setminus V^{\p_{\alpha+1}}$, 
in $V^{\p_{\alpha+2}}$ the set 
	$$
	s_\alpha := P_{\omega_1}(\alpha) \setminus V^{\p_{\alpha+1}}
	$$
is a stationary subset of $P_{\omega_1}(\alpha)$ by Gitik's theorem. 
Now the tail of the iteration $\p_{\alpha+2,\kappa}$ is proper in $V^{\p_{\alpha+2}}$. 
Therefore, $s_\alpha$ remains stationary in $P_{\omega_1}(\alpha)$ in $V^{\p}$.

Observe that if $\alpha < \beta$ are both in $S$, then by definition 
$s_\alpha \subseteq V^{\p_{\alpha+2}} \subseteq V^{\p_\beta}$, whereas 
$s_\beta \cap V^{\p_\beta} = \emptyset$. 
Thus, $s_\alpha \cap s_\beta = \emptyset$. 
It follows that in $V^{\p}$, 
$\langle s_\alpha : \alpha \in S \rangle$ 
is a disjoint stationary sequence on $\omega_2$.

\bigskip

For the second part of our proof, we work in $V^\p$ to define a forcing iteration $\A$ of length $\kappa^+$ 
which is designed to destroy the stationarity of any subset of 
$\omega_2 \cap \cof(\omega)$ which does not reflect to an ordinal in $\omega_2 \cap \cof(\omega_1)$. 
This forcing will be shown to be $\kappa$-distributive and preserve the stationarity of $S$. 
It follows from Corollary 1.3 that $\A$ preserves the fact that $\langle s_\alpha : \alpha \in S \rangle$ 
is a disjoint stationary sequence. 
Note that since $\p$ is $\kappa$-c.c.\ and has size $\kappa$, easily $2^\kappa = \kappa^+$ in $V^\p$.

The definition of and arguments involving $\A$ are essentially the same as in the original construction 
of Harrington and Shelah \cite{HS}. 
The main differences are that we are using $\p$ to collapse $\kappa$ to become $\omega_2$ 
instead of $\col(\omega_1,<\! \kappa)$, and that we are now required to show that $\A$ 
preserves the stationarity of $S$. 
We will sketch the main points of the construction, 
but leave some of the routine technical details to be 
checked by the reader in consultation with \cite{HS}.

\bigskip

Many of the facts which we will need to know 
about $\A$ can be abstracted out more generally 
to a kind of forcing iteration which we will call a 
suitable iteration. 
So before defining $\A$, let us describe 
this kind of iteration in detail.
We will assume in what follows that 
$2^{\omega_1} = \omega_2$.

Let us define abstractly the idea of a \emph{suitable iteration} 
$$
\langle \A_i, \dot T_j : i \le \alpha, j < \alpha \rangle,
$$
where $\alpha \le \omega_3$. 
Such an iteration is determined by the following 
recursion. 
A condition in $\A_i$ is any function $p$ 
whose domain is a subset of $i$ of size less than $\omega_2$ 
such that for all $j \in \dom(p)$, $p(j)$ is a 
non-empty closed and bounded subset of $\omega_2$ such that 
$p \restrict j$ forces in $\A_j$ that 
$p(j) \cap \dot T_j = \emptyset$. 
We let $q \le p$ if $\dom(p) \subseteq \dom(q)$ 
and for all $i \in \dom(p)$, $q(i)$ is an end-extension 
of $p(i)$. 
And $\dot T_i$ is a nice $\A_i$-name for a subset of 
$\omega_2 \cap \cof(\omega)$.\footnote{In our construction 
below, our specific suitable iteration will be shown to be 
$\omega_2$-distributive. 
However, being $\omega_2$-distributive is not a part of 
the abstract definition of a suitable iteration.}

Suppose that $M$ is a transitive model of  
$\textsf{ZFC}^-$ which is 
closed under $\omega_1$-sequences. 
Then if $M$ models that 
$\langle \A_i, \dot T_j : i \le \alpha, j < \alpha \rangle$ 
is a suitable iteration, then in fact it is. 
Specifically, all the notions used in the recursion above are 
upwards absolute for such a model, since $M$ contains 
all $\omega_1$-sized sets. 
For example, $M$ contains all closed and bounded subsets of 
$\omega_2$ and being a nice name is absolute. 

Observe that if $\alpha < \omega_3$, then 
$2^{\omega_1} = \omega_2$ immediately 
implies that $\A_\alpha$ has size $\omega_2$. 
On the other hand, if $\alpha = \omega_3$, then a straightforward 
application of the $\Delta$-system lemma shows that $\A_{\omega_3}$ is $\omega_3$-c.c. 
Using a covering and nice name argument, 
it then follows that if 
$\A_\beta$ is $\omega_2$-distributive for all $\beta < \omega_3$, then so is $\A_{\omega_3}$. 

\begin{lemma}
	Suppose that 
	for all $i < \alpha$, 
	$\A_i$ forces that $\dot T_i$ is non-stationary. 
	Then for any $q \in \A_\alpha$, 
	$\A_\alpha / q$ is forcing equivalent 
	to $\add(\omega_2)$.
	\end{lemma}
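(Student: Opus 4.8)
The plan is to show that under the hypothesis that each $\dot T_i$ is forced to be non-stationary, the forcing $\A_\alpha$ (or more precisely $\A_\alpha / q$ for a fixed condition $q$) is a separative, atomless poset of size $\omega_2$ which is $\omega_2$-closed in the appropriate sense, so that a standard characterization identifies it with $\add(\omega_2)$. The key external input is the classical fact (essentially due to the usual back-and-forth / amalgamation argument, as in the Harrington--Shelah setup \cite{HS}) that any two separative atomless posets of size $\le \omega_2$ which are ${<}\omega_2$-strategically closed (or simply ${<}\omega_2$-closed, which is what we will actually verify) and in which every condition has $\omega_2$ pairwise incompatible extensions are forcing equivalent to $\add(\omega_2)$, the forcing to add a Cohen subset of $\omega_2$ with conditions of size ${<}\omega_2$. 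So the work is in verifying these structural properties for $\A_\alpha / q$.

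First I would record the size bound: since $\alpha \le \omega_3$ and $2^{\omega_1} = \omega_2$, the earlier observations in the excerpt already give that $\A_\beta$ has size $\omega_2$ for $\beta < \omega_3$, and the $\omega_3$-c.c.\ of $\A_{\omega_3}$ together with a nice-name count keeps the relevant dense subset below $q$ of size $\omega_2$; in any case $\A_\alpha / q$ has a dense subset of size $\omega_2$. Second, separativity and atomlessness: given $p \in \A_\alpha/q$ and $j \in \dom(p)$, since $\A_j$ forces $\dot T_j$ non-stationary, one can find (by genericity over a name for a club disjoint from $\dot T_j$) two incompatible end-extensions of $p(j)$, each still forced disjoint from $\dot T_j$; alternatively one extends $\dom(p)$ by a fresh coordinate. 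This simultaneously shows every condition splits into $\omega_2$ many pairwise incompatible conditions — here one uses that $\omega_2 \setminus \dot T_j$ is forced to contain a club, hence arbitrarily large ordinals, to choose $\omega_2$ many distinct "next values." Third, and this is the main point requiring the hypothesis, I would show $\A_\alpha / q$ is ${<}\omega_2$-closed: given a descending $\omega_1$-sequence $\langle p_\eta : \eta < \delta\rangle$ with $\delta < \omega_2$, the union of the domains has size ${<}\omega_2$, and at each coordinate $j$ the union $\bigcup_\eta p_\eta(j)$ is a bounded (since each $p_\eta(j)$ is bounded and the sequence has length ${<}\omega_2 = \cf(\omega_2)$) subset of $\omega_2$; one then takes its closure and, crucially, must argue that this closure is still forced disjoint from $\dot T_j$. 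This is where non-stationarity of $\dot T_j$ enters: $\dot T_j$ concentrates on $\cof(\omega)$, and the only new limit point added in forming the closure is $\sup \bigcup_\eta p_\eta(j)$, which has cofinality $\cf(\delta) \le \omega_1$; if $\cf(\delta) = \omega_1$ this sup is automatically outside $\dot T_j \subseteq \cof(\omega)$, and if $\cf(\delta) = \omega$ one uses the non-stationarity of $\dot T_j$ (below a name for a club avoiding it) to steer the sequence so that the sup avoids $\dot T_j$ — this is the standard Harrington--Shelah bookkeeping. Building this lower bound by a simultaneous recursion on the coordinates, interleaved with the demand that at cofinality-$\omega$ stages we avoid $\dot T_j$, gives a condition below all $p_\eta$.

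The step I expect to be the main obstacle is precisely the closure argument at coordinates and at limit stages of cofinality $\omega$: one cannot just take closures blindly, because the new sup might land in $\dot T_j$; one must instead thread the descending sequence through (names for) clubs witnessing non-stationarity of the $\dot T_j$'s, for all the (fewer than $\omega_2$ many) coordinates $j$ appearing, simultaneously. This is a fusion-style construction and the bookkeeping is the technical heart, but it is routine given the hypothesis, and the excerpt explicitly allows us to defer such details to \cite{HS}. Once ${<}\omega_2$-closure, separativity, atomlessness, the $\omega_2$-splitting property, and the size bound $|\A_\alpha/q| \le \omega_2$ are in hand, the cited characterization of $\add(\omega_2)$ finishes the proof, and since the argument applies uniformly to every $q$, we get $\A_\alpha/q \cong \add(\omega_2)$ for all $q \in \A_\alpha$.
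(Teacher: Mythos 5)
There is a genuine gap: the central structural claim of your proposal, that $\A_\alpha / q$ itself is ${<}\omega_2$-closed, is false in general. Closure is a property quantified over \emph{all} descending sequences, so you do not get to ``steer'' the sequence --- it is handed to you. Concretely, suppose some $\dot T_j$ is forced to be non-empty, say $p \restrict j$ forces $\gamma \in \dot T_j$ for some $\gamma$ of cofinality $\omega$. Choose a strictly descending $\omega$-sequence of conditions below $p$ whose $j$-th coordinates are closed bounded sets $c_0 \subsetneq c_1 \subsetneq \cdots$ (each end-extending the last, each disjoint from $\dot T_j$, e.g.\ consisting of successor ordinals) with $\sup \bigcup_n c_n = \gamma$. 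Any lower bound would have to put a closed end-extension of every $c_n$ at coordinate $j$, hence would contain $\gamma$ as a limit point, contradicting disjointness from $\dot T_j$. So no lower bound exists, and the characterization theorem you invoke cannot be applied to $\A_\alpha / q$ directly. (Retreating to strategic closure does not rescue the argument either, since the quoted characterization of $\add(\omega_2)$ requires genuine closure, or at least a further argument you have not supplied.)

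The correct move --- and the one the paper makes --- is to use your ingredients one step earlier: fix, for each $i$, an $\A_i$-name $\dot E_i$ for a club disjoint from $\dot T_i$ (this is exactly where the hypothesis of the lemma is used), and let $D$ be the set of conditions $p$ such that for all $i \in \dom(p)$, $p \restrict i$ forces $\max(p(i)) \in \dot E_i$. One checks that $D$ is dense, and $D$ \emph{is} $\omega_2$-closed: for a descending sequence from $D$, the supremum at each coordinate is a limit of points forced into the club $\dot E_i$, hence is itself forced into $\dot E_i$ and so out of $\dot T_i$, so the coordinatewise closure of the union is again a condition in $D$. The characterization (separative, $\omega_2$-closed, size $\omega_2$, every condition with $\omega_2$ many incompatible extensions, which your proposal verifies correctly) is then applied to this dense subset, and forcing equivalence with $\A_\alpha / q$ follows from density. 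In short, you identified the right obstacle (suprema of cofinality $\omega$ landing in $\dot T_j$) and the right tool (the clubs $\dot E_i$), but you must build the avoidance of $\dot T_j$ into a dense set of conditions rather than into the sequences, since the latter are not yours to choose.
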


\begin{proof}
	First we claim that $\A_\alpha$ contains an 
	$\omega_2$-closed dense subset. 
	For each $i$ let $\dot E_i$ be an $\A_i$-name for a club disjoint from $\dot T_i$. 
	Define $D$ as the set of conditions $p$ 
	such that for all $i \in \dom(p)$, 
	$p \restrict i$ forces that $\max(p(i)) \in \dot E_i$. 
	It is easy to 
	prove that $D$ is dense and $\omega_2$-closed.
	
	Reviewing the definition of $\A_\alpha$, 
	clearly $\A_\alpha$ is separative 
	and every condition in it has $\omega_2$-many 
	incompatible extensions. 
	By a well-known fact, any $\omega_2$-closed separative 
	forcing of size $\omega_2$ 
	for which any condition 
	has $\omega_2$-many 
	incompatible extensions is forcing equivalent to 
	$\add(\omega_2)$.
\end{proof}

Having described the main facts which 
we will use about 
a suitable iteration, let us show how this kind of 
iteration can be used to obtain a model satisfying 
that stationary reflection holds at $\omega_2$. 
Suppose that we have a ground model in which 
$2^{\omega_2} = \omega_3$. 
Using a standard bookkeeping argument, we can define a 
suitable iteration 
$$
\langle \A_i, \dot T_j : i \le \omega_3, 
j < \omega_3 \rangle,
$$
so that every nice name for a non-reflecting subset of 
$\omega_2 \cap \cof(\omega)$ is equal to some $\dot T_j$. 
Specifically, assuming that 
$\A_i$ is defined for some $i < \omega_3$, 
then using $2^{\omega_2} = \omega_3$ and the fact that 
$\A_i$ has size $\omega_2$, 
we can list out all nice $\A_i$-names for subsets of 
$\omega_2 \cap \cof(\omega)$ in order type $\omega_3$. 
Now choose $\dot T_i$ to be the first name (according to 
the bookkeeping function) 
which was listed at some stage less than or equal to $i$ 
which is forced by $\A_i$ to be non-reflecting. 
In this manner, we can 
arrange that after $\omega_3$-many stages, 
all names which arise during the iteration are handled, 
and thus that the iteration destroys the stationarity of all non-reflecting sets. 
Of course this construction breaks down if we reach some 
$i$ such that $\A_i$ is not $\omega_2$-distributive. 
So proving the $\omega_2$-distributivity of such a suitable 
iteration will be the main remaining goal.

\bigskip

This completes the abstract description of a suitable iteration and how it will be 
used to obtain stationary reflection at $\omega_2$. 
Let us now return to our construction. 
Fix a generic filter $G$ on $\p$. 
Then in $V[G]$ we have that 
$\kappa = \omega_2$, $2^{\omega_1} = \omega_2$, and $2^{\omega_2} = \omega_3 = \kappa^+$. 
Working in $V[G]$, we define a suitable iteration 
$\langle \A_i, \dot T_j : i \le \kappa^+, j < \kappa^+ \rangle$. 
We will prove that each $\A_i$ is $\omega_2$-distributive and preserves the stationarity of $S$. 
By the discussion above, this will complete the proof of our main result.

Fix $\alpha < \kappa^+$. 
In $V$, fix $\p$-names $\dot \A_i$ for all $i \le \alpha$ and $\dot T_j$ for all $j < \alpha$ 
which are forced to satisfy the definitions of these 
objects given above (we will abuse notation 
by writing $\dot T_j$ for the $\p$-name for the $\A_j$-name $\dot T_j$).

We would like to prove that $\A_\alpha$ is $\kappa$-distributive and 
preserves the stationarity of $S$. 
In order to prove this, we will make two inductive hypotheses. 
The first inductive hypothesis is that for all $\beta < \alpha$, $\A_\beta$ is $\kappa$-distributive 
and preserves the stationarity of $S$. 

Before describing the second inductive hypothesis, we need to develop some ideas and notation. 
For each $\beta \le \alpha$, define in $V$ the set $\mathcal X_\beta$ 
to consist of all sets $N$ satisfying:
\begin{enumerate}
	\item $N \prec H(\kappa^+)$;
	\item $N$ contains as members $\p$ and $\langle \dot \A_i, \dot T_j : i \le \beta, j < \beta \rangle$;
	\item $\kappa_N := |N| = N \cap \kappa$ and $N^{< \kappa_N} \subseteq N$;
	\item $\kappa_N \in S$.
	\end{enumerate}
An easy application of the stationarity of $S$ and the inaccessibility of $\kappa$ shows that 
each $\mathcal X_\beta$ is a stationary subset of $P_{\kappa}(H(\kappa^+))$. 
Also note that if $N \in \mathcal X_\beta$ and $\gamma \in N \cap \beta$, then 
$N \in \mathcal X_\gamma$.

Consider $N$ in $\mathcal X_\alpha$. 
Since $\p$ is $\kappa$-c.c., the maximal condition in $\p$ is $(N,\p)$-generic. 
So if $G$ is a $V$-generic filter on $\p$, then $N[G] \cap V = N$. 
In particular, $N[G] \cap \kappa = N \cap \kappa = \kappa_N \in S$.
Let $\pi : N[G] \to \overline{N[G]}$ be the transitive collapsing map 
of $N[G]$ in $V[G]$. 
Let $G^* := G \cap \p_{\kappa_N}$, which is a $V$-generic filter on $\p_{\kappa_N}$. 

\begin{lemma}
	The following statements hold.
	\begin{enumerate}
		\item $\pi \restrict N : N \to \overline{N}$ is the transitive collapsing map of $N$ in $V$;
		\item $\pi(\p) = \p_{\kappa_N}$, $\pi(G) = G^*$, and $\overline{N[G]} = \overline{N}[G^*]$; 
		in particular, $\overline{N[G]}$ is a member of $V[G^*]$;  
		\item $\overline{N[G]} = \overline{N}[G^*]$ is closed under $< \kappa_N$-sequences in $V[G^*]$.
		\end{enumerate}
	\end{lemma}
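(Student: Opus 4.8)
The plan is to prove the three clauses in order, using throughout the fact---already recorded above, and a consequence of the trivial condition being $(N,\p)$-generic---that $N[G]\cap V = N$. Write $\pi_N := \pi\restrict N$.

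\emph{Clause (1).} I would show that $\pi_N$ is an $\in$-isomorphism of $N$ onto a \emph{transitive} set; by the uniqueness of the Mostowski collapse this forces $\pi_N$ to \emph{be} the transitive collapse of $N$ computed in $V$, and then $\overline N := \ran(\pi_N)$ automatically lies in $V$. The only thing to check is that $\ran(\pi_N)$ is transitive. So let $a\in N$ and $b\in\pi(a)$. Since $\pi$ is an isomorphism onto the transitive set $\overline{N[G]}$, we have $b=\pi(c)$ for some $c\in a\cap N[G]$; but $a\in N\subseteq V$ forces $c\in V$, hence $c\in N[G]\cap V = N$, and so $b = \pi(c)\in\ran(\pi_N)$.

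\emph{Clause (2).} The crux is the identification $\pi(\p) = \p_{\kappa_N}$. I would first observe that the iteration $\langle\p_\alpha : \alpha\le\kappa\rangle$ is definable over $H(\kappa^+)$ (with $\kappa$ its largest cardinal, and $S$ definable), so that $\p = \p_\kappa$, $\kappa$, and $\p_{\kappa_N}$ all belong to $N$ and $\pi_N(\kappa) = \ot(N\cap\kappa) = \kappa_N$. Then I would establish three facts: (a) $\p_{\kappa_N}\subseteq N$, because $N\models |\p_{\kappa_N}| = \kappa_N = N\cap\kappa$ yields a surjection $\kappa_N\to\p_{\kappa_N}$ in $N$ while $\kappa_N\subseteq N$; (b) every condition of $\p$ lying in $N$ has support a countable subset of $N\cap\kappa = \kappa_N$, so, via the regular embedding of $\p_{\kappa_N}$ into $\p$ that the paper has already invoked, $\p\cap N = \p_{\kappa_N}$; and (c) $\pi_N$ restricts to the identity on $\p_{\kappa_N}$, because the inaccessibility of $\kappa_N$ puts $\p_{\kappa_N}$ inside $H(\kappa_N)$, so every $p\in\p_{\kappa_N}$ has $|\mathrm{tc}(\{p\})| < \kappa_N$, and a set of size $<\kappa_N$ that lies in $N$ is contained in $N$, whence $\pi_N(p) = p$ by $\in$-induction. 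Putting these together, $\pi(\p) = \pi_N(\p_\kappa) = \{\pi_N(p) : p\in\p\cap N\} = \p_{\kappa_N}$, with the ordering transferring since $\p_{\kappa_N}$ is a regular suborder. The same computation, now using $G\cap N[G] = G\cap N \subseteq \p\cap N = \p_{\kappa_N}$, gives $\pi(G) = G\cap N = G\cap\p_{\kappa_N} = G^*$. Finally, from $(N,\p)$-genericity we have $N[G] = \{\tau^G : \tau\in N$ a $\p$-name$\}$, and the standard fact that the transitive collapse commutes with name-evaluation gives $\pi(\tau^G) = (\pi_N(\tau))^{G^*}$ for such $\tau$; since $\pi_N$ carries the $\p$-names in $N$ onto the $\p_{\kappa_N}$-names in $\overline N$ (by elementarity of $\pi_N^{-1}$), this yields $\overline{N[G]} = \overline N[G^*]$, which is a member of $V[G^*]$ because $\overline N\in V$ by Clause (1).

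\emph{Clause (3).} I would first transfer the closure of $N$ under $<\kappa_N$-sequences through the collapse: a sequence $\langle x_i : i<\delta\rangle$ with $\delta<\kappa_N$ and each $x_i\in\overline N$ equals $\pi_N$ applied to $\langle\pi_N^{-1}(x_i) : i<\delta\rangle$, a $\delta$-sequence of elements of $N$, which lies in $N$ by clause (3) of membership in $\mathcal X_\alpha$; hence $\overline N^{<\kappa_N}\subseteq\overline N$ in $V$. Then I would invoke the general principle that if $W$ is a transitive model of $\mathsf{ZFC}^-$ with $W^{<\lambda}\subseteq W$ in $V$ and $H$ is $V$-generic for a forcing $\mathbb Q\in W$, then $W[H]^{<\lambda}\subseteq W[H]$ in $V[H]$: given such a sequence in $W[H]$, choose coordinatewise $\mathbb Q$-names $\tau_i\in W$, note that $\langle\tau_i : i<\delta\rangle$ is a $<\lambda$-sequence of elements of $W$ and so lies in $W$, and observe that it canonically codes a single $\mathbb Q$-name in $W$ whose evaluation is the given sequence. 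Applying this with $W = \overline N$, $\mathbb Q = \p_{\kappa_N}$, $\lambda = \kappa_N$, and $H = G^*$ gives Clause (3).

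The step I expect to be the main obstacle is the identification $\pi(\p) = \p_{\kappa_N}$ in Clause (2), and inside it fact (c), that $\pi_N$ fixes every condition of $\p_{\kappa_N}$. This is where the defining clauses of $\mathcal X_\alpha$ are genuinely used: definability of the iteration over $H(\kappa^+)$ is what places $\p$, $\kappa$, and $\p_{\kappa_N}$ into $N$; elementarity of $\pi_N$ is what lets one transfer cardinality statements from $N$ and match $\p$-names with $\p_{\kappa_N}$-names; and the inaccessibility of $\kappa_N$ together with $N^{<\kappa_N}\subseteq N$ is what makes the transitive-closure cardinality count go through. The remaining ingredients---the cardinality bookkeeping, the commutation of the collapse with forcing, and the coding of a sequence of names by a single name---are routine and follow the standard Mitchell / Harrington--Shelah analysis.
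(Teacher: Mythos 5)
Your clauses (1) and (2) are essentially correct and fill in what the paper dismisses as ``straightforward,'' modulo one slip in (2): $\p_{\kappa_N}$ is \emph{not} an element of $N$, since $\kappa_N = N \cap \kappa \notin N$ (otherwise $\kappa_N \in \kappa_N$), so the justification of your fact (a) via ``$N \models |\p_{\kappa_N}| = \kappa_N$'' does not parse. The conclusion $\p_{\kappa_N} \subseteq N$ is nonetheless true and easily repaired: every condition of $\p_{\kappa_N}$ has countable, hence bounded, support below the inaccessible $\kappa_N$, so lies in (the copy of) some $\p_\beta$ with $\beta \in N \cap \kappa$, and $\p_\beta \subseteq N$ because $N \models |\p_\beta| < \kappa$ and cardinals of $N$ below $\kappa$ are contained in $N$. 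The rest of your computation of $\pi(\p)$, $\pi(G)$, and $\overline{N[G]} = \overline{N}[G^*]$ then goes through.

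The genuine gap is in clause (3). The ``general principle'' you invoke --- if $W^{<\lambda} \subseteq W$ holds in $V$ and $H$ is $V$-generic for $\q \in W$, then $W[H]^{<\lambda} \subseteq W[H]$ holds in $V[H]$ --- is false as stated. The flaw in your proof of it is that the sequence of names $\langle \tau_i : i < \delta \rangle$ is chosen in $V[H]$, not in $V$, whereas the hypothesis $W^{<\lambda} \subseteq W$ only applies to sequences lying in $V$; so you cannot conclude $\langle \tau_i : i < \delta \rangle \in W$. For a counterexample, assume \textsf{CH}, let $M \prec H(\omega_3)$ have size $\omega_1$ with $M^\omega \subseteq M$ and $M \cap \omega_2 = \gamma$ of cofinality $\omega_1$, let $W$ be the transitive collapse of $M$, and let $\q = \col(\omega,\omega_1)$. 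Then $W^\omega \subseteq W$ in $V$ and $\gamma = \omega_2^W$ stays uncountable in $W[H]$, yet in $V[H]$ the ordinal $\gamma$ (of true cardinality $\omega_1^V$) acquires an $\omega$-enumeration, i.e., an $\omega$-sequence of elements of $W[H]$ that is not in $W[H]$.

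What rescues clause (3) --- and what the paper's one-line proof explicitly cites --- is that $\p_{\kappa_N}$ is $\kappa_N$-c.c. The correct standard fact is: if $W^{<\lambda} \subseteq W$ in $V$, $\q \in W$, $\q \subseteq W$, and $\q$ is $\lambda$-c.c.\ in $V$, then $W[H]^{<\lambda} \subseteq W[H]$ in $V[H]$. The chain condition is used to replace each coordinate by a single mixed name $\sigma_i$ assembled \emph{in $V$} from a maximal antichain of size $< \lambda$ (which lies in $W$ by closure, since it is a $<\lambda$-sized subset of $\q \subseteq W$ existing in $V$) together with names in $W$; then $\langle \sigma_i : i < \delta \rangle$ is a $<\lambda$-sequence of elements of $W$ lying in $V$, hence in $W$, and can be packaged there into one name for the whole sequence. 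Your write-up never invokes the $\kappa_N$-chain condition of $\p_{\kappa_N}$, so this step must be added for the argument to be complete.
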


\begin{proof}
	(1) and (2) are straightforward. 
	Since $\overline{N}^{< \kappa_N} \subseteq \overline{N}$ in $V$ by the closure of $N$ 
	and $\p_{\kappa_N}$ is $\kappa_N$-c.c., (3) follows immediately by a standard fact.
	\end{proof}

Now we are ready to state our second inductive hypothesis: 
for all $\beta < \alpha$ and for all $N \in \mathcal X_\beta$, letting 
$\pi : N[G] \to \overline{N[G]}$ be the transitive collapsing map of $N[G]$ and $G^* := \pi(G)$, 
for all $q \in \pi(\A_\beta)$, the forcing poset 
$\pi(\A_\beta) / q$ is forcing equivalent to $\add(\omega_2)$ in $V[G^*]$.

\bigskip

We begin the proof of the two inductive hypotheses for $\alpha$, assuming that they hold 
for all $\beta < \alpha$. 
Let $N \in \mathcal X_\alpha$. 
Let $\pi : N[G] \to \overline{N[G]}$ 
be the transitive collapsing map of 
$N[G]$ and $G^* := \pi(G)$. 
Since $\pi$ is an isomorphism, by the absoluteness of suitable iterations we have that in $V[G^*]$, 
$$
\langle \A_i^*, \dot T_j^* : i \le \pi(\alpha), j < \pi(\alpha) \rangle := 
\pi(\langle \A_i, \dot T_j : i \le \alpha, j < \alpha \rangle)
$$
is a suitable iteration of length $\pi(\alpha) < \omega_3$. 
Applying Lemma 2.1 to this suitable iteration in 
the model $V[G^*]$, 
the second inductive hypothesis for $\alpha$ will follow 
from the next lemma.

\begin{lemma}
	For all $\gamma \in N \cap \alpha$, 
	$\pi(\A_\gamma) = \A_{\pi(\gamma)}^*$ 
	forces over $V[G^*]$ that 
	$\pi(\dot T_\gamma) = \dot T_{\pi(\gamma)}^*$ 
	is non-stationary in $\kappa_N$.
	\end{lemma}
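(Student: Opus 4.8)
I would fix $N \in \mathcal X_\alpha$ and $\gamma \in N \cap \alpha$, let $\pi\colon N[G]\to \overline{N[G]}$ and $G^*:=\pi(G)$ be as in the hypotheses, and prove the statement for this $\gamma$. Since $\gamma \in N \cap \alpha$ we have $N \in \mathcal X_\gamma$ (as noted after the definition of $\mathcal X_\beta$), so both inductive hypotheses are available with $\gamma$ in place of $\beta$. From the first inductive hypothesis, $\A_\gamma$ is $\kappa$-distributive over $V[G]$: it preserves $\omega_1$ and $\kappa=\omega_2$, preserves cofinalities, adds no new subsets of $\kappa_N$ (as $|\kappa_N|<\kappa$ in $V[G]$), and keeps $\cf(\kappa_N)=\omega_1$ (recall $\kappa_N\in S$). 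From the second inductive hypothesis, over $V[G^*]$ the forcing $\pi(\A_\gamma)$ is equivalent to $\add(\omega_2)=\add(\kappa_N)$ computed in $V[G^*]$; this is $<\!\kappa_N$-closed, so $\pi(\A_\gamma)$ preserves $\kappa_N=\omega_2^{V[G^*]}$ and cofinalities $\le\kappa_N$, and — because $\kappa_N$ is inaccessible in $V$ — it is, up to isomorphism, exactly the stage-$\kappa_N$ iterand $\dot\q_{\kappa_N}[G^*]$ of $\p$. One case is immediate: if $\A_\gamma$ forces over $V[G]$ that $\dot T_\gamma$ is \emph{non}-stationary, then by elementarity of $N[G]$ and the isomorphism $\pi$ the model $\overline{N[G]}=\overline N[G^*]$ thinks $\pi(\A_\gamma)$ forces $\pi(\dot T_\gamma)$ non-stationary in $\kappa_N$; a witnessing club from $\overline N[G^*][H]$ stays closed and unbounded in $V[G^*][H]$ by Lemma 2.2(3), and we are done. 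So the substance is the case in which $\A_\gamma$ forces $\dot T_\gamma$ to be stationary and non-reflecting.

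\textbf{The descent.} Let $H$ be $\pi(\A_\gamma)$-generic over $V[G^*]$; I must show $\pi(\dot T_\gamma)[H]$ is non-stationary in $\kappa_N$ in $V[G^*][H]$. Using the isomorphism $\pi(\A_\gamma)\cong\dot\q_{\kappa_N}[G^*]$, choose a $V$-generic $G'$ on $\p$ with $G'\cap\p_{\kappa_N}=G^*$ and stage-$\kappa_N$ coordinate corresponding to $H$; then $V[G^*][H]=V[G'\cap\p_{\kappa_N+1}]$, an initial-segment submodel of $V[G']$. Nothing in the statement depends on the choice of $\p$-generic beyond its restriction to $\p_{\kappa_N}$, so we may work over $V[G']$, where $\A_\gamma$ is defined and $\kappa$-distributive and forces $\dot T_\gamma$ to be non-reflecting. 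Using the $\kappa$-closed dense subset of $\A_\gamma$ from Lemma 2.1 together with the closure $N^{<\kappa_N}\subseteq N$, one checks that $H$ lifts to an $\A_\gamma$-generic $H_\gamma$ over $V[G']$ with $\pi[H_\gamma\cap N[G']]=H$; then by elementarity of $N[G']$ — the antichains of $\dot T_\gamma$ relevant to ordinals below $\kappa_N$ are decided inside $N[G']$ — we get $\pi(\dot T_\gamma)[H]=\dot T_\gamma[H_\gamma]\cap\kappa_N$. In $V[G'][H_\gamma]$ the ordinal $\kappa_N$ still has cofinality $\omega_1$ and $\dot T_\gamma[H_\gamma]$ is non-reflecting, so $\dot T_\gamma[H_\gamma]\cap\kappa_N$ is non-stationary in $\kappa_N$; since $\A_\gamma$ adds no new subsets of $\kappa_N$ over $V[G']$, a witnessing club $C$ already lies in $V[G']$.

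\textbf{The hard part.} What remains — and what I expect to be the main obstacle — is to push $C$ down into the submodel $V[G^*][H]=V[G'\cap\p_{\kappa_N+1}]$; equivalently, to show that the tail forcing $\p_{\kappa_N+1,\kappa}$ over $V[G^*][H]$ cannot be what destroyed the stationarity of $\pi(\dot T_\gamma)[H]$, i.e.\ that a set of the form $\pi(\dot T_\gamma)[H]$ which is stationary in $V[G^*][H]$ stays stationary in $V[G']$. For an arbitrary non-reflecting set this fails, because of the collapses used in the tail; so the argument must exploit how the collapsing map $\pi$ and the nice name $\dot T_\gamma$ interact, along with the fact (from the $<\!\kappa_N$-closure clauses and Lemma 2.2(3)) that $\overline N[G^*][H]$ and $V[G^*][H]$ agree on cofinalities below $\kappa_N$ and on stationarity of bounded subsets of $\kappa_N$, and that $\overline N[G^*][H]$ already sees $\pi(\dot T_\gamma)[H]$ as non-reflecting in its $\omega_2=\kappa_N$. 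Once the club is brought into $V[G^*][H]$, $\pi(\dot T_\gamma)[H]$ is non-stationary there; as $H$ was arbitrary, $\pi(\A_\gamma)$ forces over $V[G^*]$ that $\pi(\dot T_\gamma)$ is non-stationary in $\kappa_N$.
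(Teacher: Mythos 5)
Your overall architecture matches the paper's: realize $V[G^*][H]$ as an initial segment $V[G'\cap\p_{\kappa_N+1}]$ of a full $\p$-extension, lift $\pi^{-1}$ to an elementary embedding into an $\A_\gamma$-extension so that $\pi(\dot T_\gamma)^H = T_\gamma\cap\kappa_N$, use non-reflection at $\kappa_N$ plus the $\kappa$-distributivity of $\A_\gamma$ to get a club of $\kappa_N$ disjoint from $T^*:=\pi(\dot T_\gamma)^H$ lying already in $V[G']$, and then transfer the non-stationarity down to $V[G^*][H]$. But you stop exactly at that last step, label it ``the hard part,'' and misdiagnose it: you assert that for an arbitrary non-reflecting set the transfer fails ``because of the collapses used in the tail'' and that one must exploit some interaction between $\pi$ and the nice name. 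In fact no such interaction is needed. The tail $\p_{\kappa_N+1,\kappa}$ is proper over $V[G^*][H]$, and proper forcing preserves the stationarity of any subset of $\lambda\cap\cof(\omega)$ for any ordinal $\lambda$ of uncountable cofinality (translate $T$ into $\{x\in[\lambda]^{\omega}:\sup x\in T\}$, which is stationary in $[\lambda]^{\omega}$ iff $T$ is stationary in $\lambda$, and recall that properness preserves stationary subsets of $[\lambda]^{\omega}$); in particular countably closed collapses do not destroy stationary sets concentrating on cofinality $\omega$. Taking the contrapositive, $T^*$ non-stationary in $V[G']$ gives $T^*$ non-stationary in $V[G^*][H]$. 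This standard fact is the one missing idea; with it your proof closes.

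A secondary gap is the justification of the lifting. You invoke ``the $\kappa$-closed dense subset of $\A_\gamma$ from Lemma 2.1,'' but the hypothesis of Lemma 2.1 --- that each $\dot T_i$ is forced non-stationary --- fails for $\A_\gamma$ over $V[G']$, where the names are forced to be stationary non-reflecting sets; $\A_\gamma$ is only known to be $\kappa$-distributive there. What actually works is to pull $H$ back to the filter $I:=\pi^{-1}(H)$ on $N[G']\cap\A_\gamma$, note that $I$ meets every dense subset of $\A_\gamma$ lying in $N[G']$, so the coordinate-wise unions of the clubs appearing in $I$ all have supremum exactly $\kappa_N$; since $\kappa_N$ has cofinality $\omega_1$ it is forced not to belong to any $\dot T_j$ (these concentrate on $\cof(\omega)$), so adding $\kappa_N$ as a top point yields a lower bound $t\in\A_\gamma$ of $I$, and any $V[G']$-generic $h\ni t$ lifts $\pi^{-1}$. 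Finally, note that the paper avoids your step of ``choosing a $V$-generic $G'$ compatible with $H$'' (which in general requires passing to an outer model): given $q$, it takes $H$ to be the generic for $\pi(\A_\gamma)/q\cong\add(\kappa_N)$ supplied by the stage-$\kappa_N$ coordinate of the given $G$ itself, so that $V[G^*][H]=V[G\cap\p_{\kappa_N+1}]$ and the quantification over $q$ does the rest.
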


\begin{proof}
Consider $\gamma \in N \cap \alpha$. 
Then by the choice of the names used in the iteration, 
$\A_\gamma$ forces that $\dot T_\gamma$ is a subset of $\kappa \cap \cof(\omega)$ 
which does not reflect to any ordinal in $\kappa \cap \cof(\omega_1)$. 
In particular, $\A_\gamma$ forces that $\dot T_\gamma \cap \kappa_N$ is non-stationary in $\kappa_N$.

Consider $q \in \pi(\A_\gamma)$. 
We will find a $V[G^*]$-generic filter $H$ on $\pi(\A_\gamma)$ which contains $q$ 
such that in $V[G^*][H]$, $\pi(\dot T_\gamma)^H$ is non-stationary in $\kappa_N$. 
Because $q$ is arbitrary, 
this proves that $\pi(\A_\gamma)$ forces that $\pi(\dot T_\gamma)$ is non-stationary. 
Since $N$ is in $\mathcal X_\alpha$ and $\gamma \in N \cap \alpha$, 
$N$ is in $\mathcal X_\gamma$. 
By the second inductive hypothesis, $\pi(\A_\gamma) / q$ is forcing equivalent to $\add(\kappa_N)$ in $V[G^*]$. 
By definition, the forcing iteration $\p$ forces with 
$\add(\kappa_N)$ at stage $\kappa_N$. 
Hence, we can write $V[G \cap \p_{\kappa_N + 1}]$ as 
$V[G^*][H]$, where $H$ is some $V[G^*]$-generic filter on $\pi(\A_\gamma) / q$.

Now $\pi \restrict \A_\gamma$ is an isomorphism between the posets $N[G] \cap \A_\gamma$ and $\pi(\A_\gamma)$. 
Therefore, $I := \pi^{-1}(H)$ is a filter on $N[G] \cap \A_\gamma$. 
The fact that $H$ is a $V[G^*]$-generic filter on $\pi(\A_\gamma)$ easily implies that 
$I$ meets every dense subset of $\A_\gamma$ which 
is a member of $N[G]$. 
Now a lower bound $t$ of $I$ can be easily constructed 
by taking the coordinate-wise closure of the union of 
the clubs appearing in the conditions of $I$. 
Namely, the 
fact that $I$ meets every dense set in $N[G]$ 
implies that the 
maximum member of any such club is equal to $\kappa_N$, 
which has cofinality $\omega_1$ in $V[G]$ and hence 
is not in any of the sets $\dot T_j$.

Fix a $V[G]$-generic filter $h$ on $\A_\gamma$ 
which contains $t$. 
Now $\pi^{-1} : \overline{N}[G^*] \to N[G]$ is an elementary embedding of $\overline{N}[G^*]$ 
into $H(\kappa^+)^{V[G]}$ which satisfies that $\pi^{-1}(H) = I \subseteq h$. 
So by a standard fact about extending elementary 
embeddings, 
we can extend $\pi^{-1}$ to an elementary embedding 
$\tau : \overline{N}[G^*][H] \to N[G][h]$ 
which maps $H$ to $h$. 
Let $T^* := \pi(\dot T_\gamma)^H$ and $T_\gamma := (\dot T_\gamma)^h$. 
Then clearly, $\tau(T^*) = T_\gamma$.

Since $\kappa_N$ is the critical point of $\tau$, 
$T_\gamma \cap \kappa_N = T^*$. 
As $\A_\gamma$ forces that $\dot T_\gamma$ does not reflect to $\kappa_N$, 
$T^*$ is a non-stationary subset of $\kappa_N$ in the model $V[G][h]$. 
By the first inductive hypothesis, $\A_\gamma$ is $\kappa$-distributive. 
Therefore, any club of $\kappa_N$ 
in $V[G][h]$ is actually in $V[G]$. 
Thus, $T^*$ is non-stationary in $V[G]$. 
But $V[G]$ is a generic extension of $V[G^*][H]$ by the proper forcing $\p_{\kappa_N + 1,\kappa}$. 
So $T^*$ is non-stationary in $V[G^*][H]$.
\end{proof}

This completes the proof of the second inductive 
hypothesis.
It remains to prove the first inductive hypothesis that 
$\A_\alpha$ is $\kappa$-distributive and preserves 
the stationarity of $S$. 

\begin{lemma}
	For all $N \in \mathcal X_\alpha$, 
	for all $a \in N[G] \cap \A_\alpha$, there exists a filter $I$ on $N[G] \cap \A_\alpha$ 
	in $V[G]$ containing $a$ 
	which meets every dense subset of 
	$\A_\alpha$ in $N[G]$. 
	\end{lemma}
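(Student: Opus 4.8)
The plan is to build the filter $I$ by pulling back, through the transitive collapse $\pi$, a generic filter on $\pi(\A_\alpha)$ manufactured from the $\add(\kappa_N)$-generic that $\p$ itself adds at stage $\kappa_N$.

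First I would set up the collapse exactly as in the discussion preceding Lemma 2.3: given $N \in \mathcal X_\alpha$ and $a \in N[G] \cap \A_\alpha$, let $\pi : N[G] \to \overline{N[G]} = \overline{N}[G^*]$ be the transitive collapsing map with $G^* = \pi(G) = G \cap \p_{\kappa_N}$ as in Lemma 2.2, and set $q := \pi(a) \in \pi(\A_\alpha)$. Since $\kappa_N \in S$ is inaccessible, $\p_{\kappa_N}$ forces $\kappa_N = \omega_2$, so $\omega_2^{V[G^*]} = \kappa_N$, and by the absoluteness of suitable iterations $\pi(\A_\alpha)$ is, in $\overline{N}[G^*] \in V[G^*]$, a stage of a suitable iteration of length $\pi(\alpha) < \omega_3$.

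Next I would combine the two available ingredients. By the second inductive hypothesis, now established for $\alpha$, the quotient $\pi(\A_\alpha)/q$ is forcing equivalent to $\add(\kappa_N)$ in $V[G^*]$; and by the definition of $\p$, the poset $\p_{\kappa_N+1}$ is forcing equivalent to $\p_{\kappa_N} * \add(\kappa_N)$. Hence I can write $V[G \cap \p_{\kappa_N+1}] = V[G^*][H]$ for a $V[G^*]$-generic filter $H$ on $\pi(\A_\alpha)$ containing $q$, and since $\p_{\kappa_N+1}$ is an initial segment of $\p$, such an $H$ lies in $V[G]$. As $\pi$ restricts to an isomorphism between the posets $N[G] \cap \A_\alpha$ and $\pi(\A_\alpha)$, I would then set $I := \pi^{-1}[H]$: this is a filter on $N[G] \cap \A_\alpha$, it belongs to $V[G]$, and it contains $\pi^{-1}(q) = a$. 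Finally, for any dense $D \in N[G]$ in $\A_\alpha$, the image $\pi(D)$ is a member of $\overline{N}[G^*]$ which is dense in $\pi(\A_\alpha)$, so $H$, being generic over $\overline{N}[G^*]$, meets $\pi(D)$; pulling this back shows $I \cap D \neq \emptyset$, which finishes the proof.

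The step I expect to be the main obstacle is the conversion carried out in the third paragraph: using the second inductive hypothesis to recognize the $\add(\kappa_N)$-generic that $\p$ has already added at stage $\kappa_N$ as a $V[G^*]$-generic filter on $\pi(\A_\alpha)$ lying inside $V[G]$. The care required is that the forcing equivalence $\pi(\A_\alpha)/q \cong \add(\kappa_N)$ be read off in $V[G^*]$ — so that it applies to a filter generic over all of $V[G^*]$, not merely over $\overline{N}[G^*]$ — and that $\overline{N}[G^*] \in V[G^*]$, which is what guarantees that every dense subset of $\pi(\A_\alpha)$ descending from $N[G]$ is a member of $V[G^*]$ and hence met by $H$. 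The remaining points — that $I$ is a filter, lies in $V[G]$, contains $a$, and pulls density back correctly — are routine transfer along $\pi$.
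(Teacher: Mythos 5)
Your proposal is correct and follows essentially the same route as the paper's proof: collapse $N[G]$, use the second inductive hypothesis to identify $\pi(\A_\alpha)/\pi(a)$ with $\add(\kappa_N)$ over $V[G^*]$, realize a $V[G^*]$-generic filter $H$ inside $V[G\cap\p_{\kappa_N+1}]\subseteq V[G]$ from the Cohen subset added at stage $\kappa_N$, and pull it back through $\pi$ to get $I$. The points you flag as delicate (genericity over all of $V[G^*]$ and $\overline{N}[G^*]\in V[G^*]$) are exactly the ones the paper relies on.
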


\begin{proof}
	This is similar to a part of the proof of the Lemma 2.3. 
	Let $\pi : N[G] \to \overline{N[G]}$ be the transitive collapsing map of $N[G]$ and $G^* := \pi(G)$. 
	Let $a \in N[G] \cap \A_\alpha$. 
	Then $\pi(a) \in \pi(\A_\alpha)$. 
	By the second inductive hypothesis which we have 
	now verified for $\alpha$, 
	$\pi(\A_\alpha) / \pi(a)$ 
	is forcing equivalent to $\add(\kappa_N)$ in $V[G^*]$. 
	By definition, the forcing iteration $\p$ 
	forces with $\add(\kappa_N)$ at stage $\kappa_N$. 
	Hence, we can write $V[G \cap \p_{\kappa_N + 1}]$ as 
	$V[G^*][H]$, where $H$ is some $V[G^*]$-generic filter on $\pi(\A_\alpha) / \pi(a)$.

	Now $\pi \restrict \A_\alpha$ 
	is an isomorphism between the posets 
	$N[G] \cap \A_\alpha$ and $\pi(\A_\alpha)$. 
	Therefore, $I := \pi^{-1}(H)$ is a filter on 
	$N[G] \cap \A_\alpha$. 
	The fact that $H$ is a $V[G^*]$-generic filter on 
	$\pi(\A_\alpha)$ easily implies that 
	$I$ meets every dense subset of $\A_\alpha$ which 
	is a member of $N[G]$.
	\end{proof}

We can now complete the proof that 
$\A_\alpha$ is $\kappa$-distributive 
and preserves the stationarity of $S$. 
Given a family $\mathcal D$ 
of fewer than $\kappa$ many dense open 
subsets of $\A_\alpha$ and a condition $a \in \A_\alpha$, 
we may pick $N \in \mathcal X_\alpha$ so that $\mathcal D$ 
and $a$ are members of $N[G]$. 
Then $\mathcal D \subseteq N[G]$. 
By Lemma 2.4, fix a 
filter $I$ on $N[G] \cap \A_\alpha$ 
in $V[G]$ which contains $a$ and meets every 
dense subset of $\A_\alpha$ in $N[G]$ (and in particular, 
meets every dense set in $\mathcal D$). 
It is easy to define a lower bound $t$ of $I$ in 
$\A_\alpha$ by taking the coordinate-wise 
closure of the union of the clubs appearing in 
the conditions in $I$. 
Then $t \le a$ and $t$ is in every dense open set 
in $\mathcal D$.

Similarly, given an $\A_\alpha$-name $\dot C$ for a club subset of $\kappa$ and $a \in \A_\alpha$, 
we may choose $N \in \mathcal X_\alpha$ such that $\dot C$ and $a$ are in $N$. 
Fix a filter $I$ on $N[G] \cap \A_\alpha$ 
in $V[G]$ which contains $a$ and 
meets every dense subset of 
$\A_\alpha$ in $N[G]$. 
As usual, let $t$ be a lower bound of $I$. 
Then $t$ is an $(N[G],\A_\alpha)$-generic condition, 
which implies that $t$ forces that $N[G] \cap \kappa = \kappa_N$ is in $S \cap \dot C$.

\section{Arbitrarily large continuum}

In the model of the previous section, $2^\omega = \omega_2$ holds. 
A violation of \textsf{CH} is necessary, since $\textsf{CH}$ implies the 
$\omega_1$-approximation property, as witnessed by any enumeration 
of all countable subsets of $\omega_2$ in order type $\omega_2$. 
In this section, we will show how to modify this model to obtain 
arbitrarily large continuum. 
This modification will use an unpublished result of I.\ Neeman.

\begin{thm}[Neeman]
	Assume that stationary reflection holds at $\omega_2$. 
	Then for any ordinal $\mu$, 
	$\mathrm{Add}(\omega,\mu)$ forces that stationary reflection still holds at $\omega_2$.
	\end{thm}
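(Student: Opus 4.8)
The plan is to show that $\add(\omega,\mu)$ preserves stationary reflection at $\omega_2$ by a reflection argument internal to the forcing. Let $S \subseteq \omega_2 \cap \cof(\omega)$ be forced by some $p \in \add(\omega,\mu)$ to be stationary; I want to find $q \le p$ and a $\beta \in \omega_2 \cap \cof(\omega_1)$ such that $q$ forces $S \cap \beta$ to be stationary in $\beta$. Since $\add(\omega,\mu)$ is c.c.c., and in particular preserves $\omega_1$ and $\omega_2$ and all cofinalities, the key point will be to reduce to a small piece of the forcing. The standard move: by c.c.c.\ and a name-counting/support argument, there is a set $a \in [\mu]^{\le \omega_1}$ such that, letting $\q := \add(\omega,a)$, the condition $p \restrict a$ already forces (in $\q$) that $S \cap \check\omega_2$ — more precisely, the portion of $\dot S$ decided by $\q$-conditions — is stationary; because any club in $\omega_2$ added by the full forcing is, by c.c.c., covered by a club in a size-$\omega_1$ suforcing, and $|a|$-sized subforcings are isomorphic to $\add(\omega,\omega_1)$. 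So it suffices to prove the theorem for $\mu = \omega_1$, i.e.\ for $\add(\omega,\omega_1)$, which has size $\omega_1 < \omega_2$.

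Now work in $V$ with $\q = \add(\omega,\omega_1)$ and a $\q$-name $\dot S$ for a stationary subset of $\omega_2 \cap \cof(\omega)$. The plan is an elementary-submodel / reflection argument. Fix a large $\theta$ and build an internally approachable chain $\langle N_i : i < \omega_1 \rangle$ of elementary submodels of $H(\theta)$, each containing $\q$, $\dot S$, $p$, with $\langle N_j : j \le i \rangle \in N_{i+1}$, and set $N := \bigcup_i N_i$ and $\beta := N \cap \omega_2 \in \omega_2 \cap \cof(\omega_1)$. Since $|\q| = \omega_1$, we may arrange $\q \subseteq N$ (indeed $\q \in N_0$ and $\q$ has size $\omega_1$, so $\q \subseteq N$), hence $\q = N \cap \q$ and every $\q$-generic filter $g$ satisfies $N[g] \cap V = N$ and $N[g] \cap \omega_2 = \beta$. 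The goal is to show $p \Vdash$ "$\dot S \cap \beta$ is stationary in $\beta$". Suppose not; then some $q \le p$ forces $\dot E \cap \dot S \cap \beta = \emptyset$ for a $\q$-name $\dot E$ for a club in $\beta$. Pass to a generic $g$ containing $q$; in $V[g]$, $N[g] \prec H(\theta)^{V[g]}$ via the standard fact, and $\langle N_i[g] : i < \omega_1 \rangle$ witnesses that $\beta$ has a club of submodels whose $\omega_2$-traces sit below $\beta$. Since in $V$ (hence, by c.c.c., in $V[g]$) $\{N_i \cap \omega_2 : i < \omega_1\}$ is a club in $\beta$, the club $\dot E^g$ meets it, so there is $i$ with $\delta_i := N_i \cap \omega_2 = N_i[g] \cap \omega_2 \in \dot E^g$.

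The crux is then to run a reflection-down argument showing that $\dot S$ being stationary in $\omega_2$ forces $\dot S^g \cap \delta_i$ to be stationary in $\delta_i$ for cofinally many such $i$, contradicting $\dot E^g \cap \dot S^g \cap \beta = \emptyset$. Concretely: because $\q \subseteq N_i$ is c.c.c.\ of size $\le \omega_1$ and $\delta_i$ has uncountable cofinality in both $V$ and $V[g]$, any club $C \in V[g]$ with $C \subseteq \delta_i$ restricts to a club coded in $N_i[g]$; elementarity of $N_i[g]$ and the forced stationarity of $\dot S$ give a point of $\dot S^g \cap C$ below $\delta_i$. Thus $\dot S^g \cap \delta_i$ is stationary in $\delta_i$ for each $i$ with $\delta_i \in \dot E^g$, so in particular $\delta_i \in S^g$ is impossible only if… — rather, we get $\dot S^g$ reflects at $\delta_i$, but we need a single reflection point; the honest finish is that $\dot S^g$ reflects at $\beta$ itself: the $\delta_i$ form a club in $\beta$, each is a limit of points of $\dot S^g$, so $\dot S^g \cap \beta$ is stationary in $\beta$ — contradicting the choice of $q$ and $\dot E$. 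Hence $p$ forces $\dot S$ to reflect at $\beta$, completing the proof of the $\mu = \omega_1$ case and therefore, via the c.c.c.\ covering reduction, the general statement.

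The main obstacle is the reduction to $\add(\omega,\omega_1)$ together with making the elementary-submodel reflection genuinely produce a \emph{stationary} trace rather than merely a single point: one must be careful that the club of $N_i$'s is a club in $V[g]$ (using c.c.c.\ preservation of clubs and the fact that $\{N_i : i < \omega_1\}$ being club is absolute since $\q \subseteq N$), and that the forced stationarity of $\dot S$ transfers through $N_i[g]$ to each $\delta_i$ uniformly. This is exactly the kind of argument where $\q$ having size $< \omega_2$ is used essentially — it is what allows $\q \subseteq N$ and hence $N[g] \cap \omega_2 = N \cap \omega_2$ — and is why the c.c.c.\ covering step at the start, cutting $\mu$ down to $\omega_1$, is not a mere convenience but the heart of the matter.
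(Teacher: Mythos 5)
There are several genuine gaps here, and the argument as written cannot be repaired along the lines you propose. The most basic problem is that your proof never uses the hypothesis that stationary reflection holds in the ground model. If your argument were correct, it would show that $\mathrm{Add}(\omega,\omega_1)$ forces every stationary subset of $\omega_2 \cap \cof(\omega)$ to reflect, with no assumption on $V$ at all --- which is false (over $L$, a non-reflecting stationary set remains stationary and non-reflecting after any c.c.c.\ forcing, since clubs of an ordinal of cofinality $\omega_1$ in the extension contain ground-model clubs). The hypothesis has to enter somewhere, and in the paper's proof it enters at a very specific point: one first applies Fodor's lemma to uniformize the ``lower parts'' $s_\alpha \restrict (\alpha \times \omega)$ of conditions $s_\alpha \le p$ deciding $\alpha \in \dot S$, obtaining a \emph{ground-model} stationary set $U$ with constant lower part $x$; ground-model stationary reflection is then applied to $U$ to produce the reflection point $\beta$, and a compatibility argument (exploiting that conditions are finite and the lower parts agree) shows $q := x \cup p$ forces $\dot S \cap \beta$ to be stationary. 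Your proposal contains no analogue of this step.

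Two further steps fail independently. First, the reduction to $\mathrm{Add}(\omega,\omega_1)$ is impossible: a nice name for a subset of $\omega_2$ consists of $\omega_2$-many countable antichains and hence genuinely involves $\omega_2$-many coordinates of the forcing; no subforcing of size $\omega_1$ captures $\dot S$, nor its stationarity. The correct reduction (the one in the paper, used only for $\mu > \omega_2$) is to a subforcing of size $\omega_2$, and the substantive work then happens for $\mathrm{Add}(\omega,\omega_2)$, which is \emph{not} contained in any single $N_i$ or even in $N$. Second, your concluding step is a false implication: knowing that every $\delta_i$ in a club subset of $\beta$ is a limit of points of $\dot S^g$ does not imply that $\dot S^g \cap \beta$ is stationary in $\beta$ (one can have a set meeting every interval between consecutive points of a club $D$, hence with limits forming a club, while being disjoint from $D$). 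Relatedly, the $\delta_i = N_i \cap \omega_2$ for countable $N_i$ have cofinality $\omega$, so the assertion that ``$\dot S^g \cap \delta_i$ is stationary in $\delta_i$'' for such $\delta_i$ carries no content, and the claim that $\delta_i$ has uncountable cofinality is incorrect. I would recommend restructuring the proof around the Fodor/uniformization idea: the set $T$ of ordinals whose membership in $\dot S$ is decided positively by some extension of $p$ is a ground-model stationary set, and reflecting a suitably uniformized stationary subset of $T$ is what transfers reflection from $V$ to the extension.
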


\begin{proof}
	We first prove the result in the special case that $\mu = \omega_2$. 
	Let $p \in \add(\omega,\omega_2)$, and suppose that $p$ forces that 
	$\dot S$ is a stationary subset of $\omega_2 \cap \cof(\omega)$. 
	We will find $q \le p$ and an ordinal $\beta \in \omega_2 \cap \cof(\omega_1)$ such that 
	$q$ forces that $\dot S \cap \beta$ is stationary in $\beta$.
	
	Let $T$ be the set of ordinals $\alpha < \omega_2$ 
	such that for some $s \le p$, $s$ forces that $\alpha \in \dot S$. 
	Then $T \subseteq \omega_2 \cap \cof(\omega)$. 
	An easy observation is that $p$ forces that $\dot S \subseteq T$, and consequently 
	$T$ is a stationary subset of $\omega_2$. 
	For each $\alpha \in T$, fix a witness $s_\alpha \le p$ which forces that $\alpha \in \dot S$, and define 
	$$
	a_\alpha := s_\alpha \restrict (\alpha \times \omega) \ \textrm{and} \ 
	b_\alpha := s_\alpha \restrict ([\alpha,\omega_2) \times \omega).
	$$
	
	Using Fodor's lemma, we can find a stationary set $U \subseteq T$ 
	and a set $x$ satisfying that for all $\alpha \in U$, $a_\alpha = x$. 
	Observe that $q := x \cup p$ is a condition which extends $p$. 
	Applying the fact that stationary reflection holds in the ground model together with an easy closure argument, 
	we can fix $\beta \in \omega_2 \cap \cof(\omega_1)$ 
	such that $U \cap \beta$ is stationary in $\beta$ and for all $\alpha < \beta$, 
	$\dom(s_\alpha) \subseteq \beta \times \omega$.

	We claim that $q$ forces that $\dot S \cap \beta$ is stationary in $\beta$, which finishes the proof. 
	Suppose for a contradiction that there is $r \le q$ which forces that $\dot S \cap \beta$ is non-stationary in $\beta$. 
	Using the fact that $\add(\omega,\omega_2)$ is c.c.c.\ and $\cf(\beta) = \omega_1$, there exists 
	a club $D \subseteq \beta$ in the ground model such that $r$ forces that $D \cap \dot S = \emptyset$. 
	As $r$ is finite, we can fix $\delta < \beta$ such that 
	$\dom(r) \cap (\beta \times \omega) \subseteq \delta \times \omega$.

	Since $U \cap \beta$ is stationary in $\beta$, fix 
	$\alpha \in U \cap D$ larger than $\delta$. 
	We claim that $s_\alpha$ and $r$ are compatible. 
	By the choice of $U$, $s_\alpha \restrict (\alpha \times \omega) = x$, and by the choice of $\beta$, 
	$\dom(s_\alpha) \subseteq \beta \times \omega$. 
	Suppose that $(\xi,n) \in \dom(s_\alpha) \cap \dom(r)$. 
	Then $\xi < \beta$, so $(\xi,n) \in \dom(r) \cap (\beta \times \omega) \subseteq \delta \times \omega$. 
	Thus, $\xi < \delta < \alpha$. 
	So $(\xi,n) \in \alpha \times \omega$, and hence $s_\alpha(\xi,n) = a_\alpha(\xi,n) = x(\xi,n)$. 
	On the other hand, $r \le q \le x$, and so 
	$r(\xi,n) = x(\xi,n) = s_\alpha(\xi,n)$.

	This proves that $r$ and $s_\alpha$ are compatible. 
	Fix $t \le r, s_\alpha$. 
	Since $t \le s_\alpha$, $t$ forces that $\alpha \in \dot S$. 
	On the other hand, $\alpha \in D$, and $r$ forces that $\dot S \cap D = \emptyset$. 
	So $r$, and hence $t$, forces that $\alpha \notin \dot S$, which is a contradiction.
	
	Now we prove the result for arbitrary ordinals $\mu$. 
	If $\mu < \omega_2$, then $\add(\omega,\omega_2)$ is isomorphic to 
	$\add(\omega,\mu) \times \add(\omega,\omega_2 \setminus \mu)$. 
	Since stationary reflection holds in $V^{\add(\omega,\omega_2)}$, it also holds in 
	the submodel $V^{\add(\omega,\mu)}$, since a non-reflecting stationary set in the latter model would 
	remain a non-reflecting stationary set in the former model.
	
	Suppose that $\mu > \omega_2$. 
	Let $p$ be a condition in $\add(\omega,\mu)$ which forces that $\dot S$ 
	is a stationary subset of $\omega_2 \cap \cof(\omega)$, for some nice name $\dot S$. 
	Then by the c.c.c.\ property of $\add(\omega,\mu)$ and the fact that conditions are finite, 
	it is easy to show there exists a set $X \subseteq \mu$ of size $\omega_2$ 
	such that $\dot S$ is a nice $\add(\omega,X)$-name and $p \in \add(\omega,X)$. 
	Since $X$ has size $\omega_2$, $\add(\omega,X)$ is isomorphic to $\add(\omega,\omega_2)$. 
	By the first result above, we can find $q \le p$ in $\add(\omega,X)$ and $\beta \in \omega_2 \cap \cof(\omega_1)$ 
	such that $q$ forces in $\add(\omega,X)$ that $\dot S \cap \beta$ is stationary in $\beta$. 
	Since $\add(\omega,\mu)$ is isomorphic to $\add(\omega,X) \times \add(\omega,\mu \setminus X)$ 
	and $\add(\omega,\mu \setminus X)$ is c.c.c.\ in $V^{\add(\omega,X)}$, an easy argument shows that 
	$q$ forces in $\add(\omega,\mu)$ that $\dot S \cap \beta$ is stationary in $\beta$.
	\end{proof}

Now start with the model $W := V^{\p * \dot \A}$ from the previous section. 
Then $\omega_2$ is not weakly compact in $L$, there exists a disjoint stationary sequence in $W$,  
and stationary reflection holds at $\omega_2$ in $W$. 
Let $\mu$ be any ordinal and let $H$ be a $W$-generic filter on $\add(\omega,\mu)$.  
Since $\add(\omega,\mu)$ is c.c.c., Corollary 1.3 implies that there exists a disjoint stationary sequence in $W[H]$. 
As $\omega_2$ is not weakly compact in $L$, there exists an $\omega_2$-Aronszajn tree in $W[H]$. 
And stationary reflection holds in $W[H]$ by Theorem 3.1.

\bibliographystyle{plain}
\bibliography{paper35}

\begin{thebibliography}{1}

\bibitem{eight}
J.~Cummings, S.D. Friedman, M.~Magidor, A.~Rinot, and D.~Sinapova.
\newblock The eightfold way.
\newblock {\em J. Symbolic Logic}, 83(1):349--371, 2018.

\bibitem{jk32}
T.~Gilton and J.~Krueger.
\newblock The {H}arrington-{S}helah model with large continuum.
\newblock {\em J. Symbolic Logic}, 84(2):684--703, 2019.

\bibitem{gitik}
M.~Gitik.
\newblock Nonsplitting subset of $\mathcal{P}_\kappa(\kappa^+)$.
\newblock {\em J. Symbolic Logic}, 50(4):881--894, 1985.

\bibitem{HS}
L.~Harrington and S.~Shelah.
\newblock Some exact equiconsistency results in set theory.
\newblock {\em Notre Dame J. Formal Logic}, 26(2):178--188, 1985.

\bibitem{jk11}
J.~Krueger.
\newblock Some applications of mixed support iterations.
\newblock {\em Ann. Pure Appl. Logic}, 158(1-2):40--57, 2009.

\bibitem{mitchell}
W.~Mitchell.
\newblock Aronszajn trees and the independence of the transfer property.
\newblock {\em Ann. Math. Logic}, 5:21--46, 1972/73.

\end{thebibliography}

\end{document}